\documentclass[pdflatex,sn-mathphys-num]{sn-jnl}
\usepackage[T1]{fontenc}
\usepackage{amsmath,amssymb,amsthm,booktabs,microtype,graphicx,xcolor}
\usepackage{caption,subcaption}
\usepackage{float,placeins}
\usepackage{mathtools}

\DeclareCaptionFont{sncaption}{\figurecaptionfont}
\hypersetup{hidelinks,pdftitle={A short open space curve of unit width},
 pdfsubject={Exact analytic construction and computer-assisted width certificate}}
\theoremstyle{thmstyleone}
\newtheorem{theorem}{Theorem}
\newtheorem{lemma}[theorem]{Lemma}
\newtheorem{proposition}[theorem]{Proposition}
\newtheorem{conjecture}[theorem]{Conjecture}
\usepackage{amsthm}
\theoremstyle{remark}
\newtheorem{remark}{Remark}
\newcommand{\R}{\mathbb R}
\newcommand{\conv}{\operatorname{conv}}
\newcommand{\norm}[1]{\left\lVert#1\right\rVert}
\newcommand{\ip}[2]{\langle #1,#2\rangle}
\newcommand{\atan}{\operatorname{atan2}}
\newcommand{\rev}{\operatorname{rev}}
\newcommand{\xx}{\mathbf x}
\newcommand{\eH}{\mathbf e_H}
\newcommand{\vH}{\mathbf v_H}
\newcommand{\ez}{\mathbf e_z}
\newcommand{\eF}{\mathbf e_F}
\newcommand{\vF}{\mathbf v_F}
\newcommand{\wF}{\mathbf w_F}
\newcommand{\av}{\mathbf a}
\newcommand{\gv}{\mathbf g}
\newcommand{\Tv}{\mathbf T}
\newcommand{\Nv}{\mathbf N}
\newcommand{\nv}{\mathbf n}
\newcommand{\mv}{\mathbf m}
\begin{document}
\title[A short open space curve of unit width]{A short open space curve of unit width}
\author[1]{\fnm{Tinko Sebastian} \sur{Bartels}}
\author[2]{\fnm{Dennis Kevin} \sur{Choy}}
\author[1]{\fnm{Ruixiang} \sur{Wu}}
\author*[1]{\fnm{Tongxin} \sur{Li}}
\email{litongxin@cuhk.edu.cn}

\affil[1]{\orgname{The Chinese University of Hong Kong, Shenzhen},
  \orgaddress{\street{2001 Longxiang Road, Longgang District},
    \city{Shenzhen}, \postcode{518172}, \country{China}}}
\affil[2]{\orgname{Technische Universit\"at Berlin},
  \orgaddress{\street{Stra\ss{}e des 17. Juni 135},
    \city{Berlin}, \postcode{10623}, \country{Germany}}}

\abstract{In this work, we propose a curve to be a minimum of length among rectifiable, continuous curves of unit width in three-dimensional Euclidean space. We show that a previously conjectured minimizer of that problem is not of unit width, give the numerical derivation of a candidate family of unit width curves, prove that it admits a strict local minimum within that candidate family and that this minimum is of unit width and length $3.905124162243593\ldots$. The curve consists of eight straight segments, two cone geodesics, and three circular helices. Its parameters are specified by a uniquely isolated solution of the joining and stationarity equations. To establish unit width in every direction, we combine analytic estimates with interval arithmetic on a finite covering of the remaining directions. We conjecture that the curve is globally minimizing. We supplement this work with the programs used to identify the candidates, replicate the computer-assisted parts of our proof, as well as a formalization of our proof in Lean.}

\keywords{Space curves, minimum width, geometric optimization,
convex geometry, computer-assisted proof, formal verification}
\pacs[Mathematics Subject Classification (2020)]{53A04, 52A40}
\maketitle

\vspace{-18pt}
\begingroup\footnotesize
\noindent\textbf{Acknowledgements}\par
\noindent The partially analytical method for width computation by enumerating critical point candidates from the projections of arcs, the discovery that the width of Zalgaller's curve is below $1$ through direction sampling, and the original attempts to improve Ghomi's closed curve through replacing helix pieces with straight lines were all previously made in the second author's unpublished master thesis, "Space Curves Minimizing the Ratio of Length to Width", Oct 14, 2021, that looked into potential improvements of Ghomi's closed curve.
\par\noindent This work was supported by the 1+1+1 Joint Collaboration Fund of The Chinese University of Hong Kong, The Chinese University of Hong Kong, Shenzhen, and the Department of Science and Technology of Guangdong Province to Tongxin Li, Tinko Sebastian Bartels, and Ruixiang Wu under Grant No. 2025A0505000047.
\par\endgroup
\clearpage

\section{Introduction}
We will first introduce the minimization problem, its previously proposed solution and its gaps, then introduce our own contribution. 

The minimum width of a curve is the thickness of its narrowest containing slab, that is the minimum of distance between two parallel planes that can contain the curve between them over all normal directions.
We consider rectifiable curves in three-dimensional Euclidean space. Formally a curve
$\gamma:[0,1]\to\R^3$, width is defined as 
\[
 w(\gamma)=\min_{\norm \nv=1}
 \left(\max_s\ip \nv{\gamma(s)}-\min_s\ip \nv{\gamma(s)}\right),
 \qquad m_{\rm open}=\inf_{w(\gamma)>0}\frac{L(\gamma)}{w(\gamma)}.
\]
Here $L$ is geometric length.
The quotient is invariant under scaling, so determining $m_{\rm open}$
is equivalent to minimizing length subject to $w=1$. The width of a curve or a more general set is the width of its convex hull.

The two-dimensional analogue arose in the study of escape paths from an infinite
strip, which is an instance of a more general minimization problem posed by Richard Bellman in \cite{bellman1956minimizationproblem}, commonly dubbed the "Lost in a forest problem"~\cite{Finch01102004}. Zalgaller~\cite{ZalgallerPlanar} found the minimizing caliper-shaped
arc, consisting of line segments and circular arcs, with length
approximately $2.2783$ at unit width. Adhikari and Pitman~\cite{AdhikariPitman}
rediscovered this solution and gave another proof of its optimality. In three-dimensional space, Zalgaller~\cite{Zalgaller} proposed a curve
$L_3$ obtained from cylindrical geodesics and polygonal pieces, and
conjectured that it minimizes length at unit width. Its reported length
is approximately $3.9215$. Ghomi~\cite[Theorem~1.1]{Ghomi} proved a
universal lower bound
\[
 m_{\rm open}\ge \sqrt{\ell^2_\text{caliper}+3^2} \approx 3.767 .
\]
Here $\ell_\text{caliper}$ is the length of the planar caliper curve at unit width.
His argument combines Wienholtz's alternating-support theorem with the
planar width bound and an inequality for the lengths of orthogonal
projections. These results leave a gap between the lower bound and the
proposed spatial construction.

Related computational approaches to escape-path problems include Deng's
traveling-salesman formulations with neighborhoods~\cite{DengGeneral},
subsequent convergence results and Lean formalizations~\cite{DengVariations},
and a computational treatment of the planar strip problem~\cite{DengStrip}.
The spatial examples in those works concern sphere containment or inspection rather than the open minimum-width problem considered here. 
For polygonal curves which have polyhedral convex hulls, width computation, i.e. minimization over all supporting directions reduces to finitely many antipodal face-vertex and edge-edge configurations, as in Houle and Toussaint~\cite{HouleToussaint}. 
We use this reduction in the numerical search described in Section~\ref{sec:search}.

The proof of unit width for Zalgaller's curve $L_3$ in~\cite[\S14]{Zalgaller} is incorrect.
For the parameters printed in \S12, the entire continuous curve $L_3$ lies
in the slab
\[
 0\le y+z\le t+c,\qquad
 w(L_3)\le\frac{t+c}{\sqrt2}<0.991103<1,
\]
where, in his notation, $t=\frac{a-b}{\sqrt{4a^2-1}}$ and
$(a,b,c)=(0.761337,0.064738,0.794982)$.

Define $s(x,y,z)=y+z$, then vertex formulas give
\[
s(A)=s(D)=0,\qquad s(C)=c+t>1.4016,
\]
whereas, with $T(x,y,z)=(-x,-y,z)$,
\[
\bigl(s(M),s(N),s(B),s(T(M)),s(T(N))\bigr)
\approx(0.027,0.121,0.188,0.533,1.225).
\]
The central helix $h(\theta)$ projects to $(y,z)=(\sin\theta,\cos\theta)$,
where $|\theta|\le\alpha$ and
\[
\alpha=\arctan\,(t/c)-\arccos\!\bigl((t^2+c^2)^{-1/2}\bigr)
\in(0.647,0.648).
\]
Since $\cos\theta-\sin\theta>0$ on this interval,
\[
0<\cos\alpha-\sin\alpha
\le s(h(\theta))
\le\cos\alpha+\sin\alpha<1.401<c+t.
\]
Every other curve piece is a line segment of which we only need to consider the endpoints. Thus the minimum is attained at $A,D$ and the
maximum at $C$, giving directional width
\[
w_{(0,1,1)/\sqrt2}(L_3)
=\frac{c+t}{\sqrt2}\approx0.99110214<1.
\]

The supplementary program \path{supplement/zalgaller_diagonal.py} verifies these endpoint comparisons and the helix monotonicity conditions by outward interval arithmetic, enclosing the width in this direction strictly below $1$. Its \texttt{--rounding-box} option checks robustness to rounding of the printed parameters. Interval arithmetic is a classical approach to guarantee bounds despite rounding error in floating-point calculations.
Figure~\ref{fig:zalgaller-slab} visualizes the curve's projection into the $yz$-plane and slab.

\begin{figure}[tbp]
\centering
\begin{subfigure}[t]{0.5494\linewidth}
\includegraphics[width=\linewidth]{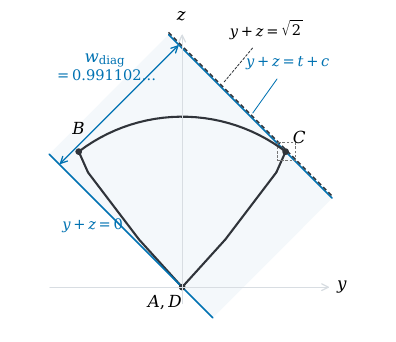}
\caption{Full $yz$ projection}
\end{subfigure}%
\begin{subfigure}[t]{0.4506\linewidth}
\includegraphics[width=\linewidth]{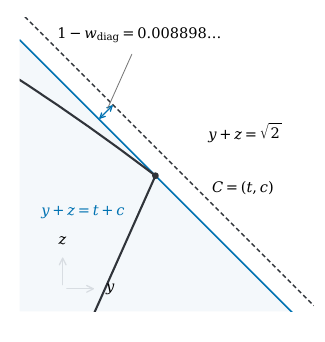}
\caption{Enlargement at $C$}
\end{subfigure}
\caption{Zalgaller's $L_3$ at the printed parameters.
In blue, we show projections of the supporting planes
$y+z=0$ and $y+z=t+c$ whose distance is
$w_{\rm diag}=(t+c)/\sqrt{2}=0.991102\ldots<1$.
The dashed line $y+z=\sqrt{2}$ is at distance one from the lower
plane and shows what plane the curve would have to reach to be of one width in that direction. Here $A,D$ project to $(0,0)$ and $C$ to $(t,c)$}
\label{fig:zalgaller-slab}
\end{figure}

\paragraph{Our contribution.} We construct a curve of width exactly one and length approximately
$3.905124162243593$. It partially follows Zalgaller's construction with regard to the arrangement of the endpoints and having a central arc following a cylinder. It differs by having outer arms formed from
tangent-cone geodesics, a pair of radius-$1/2$ helices, and straight
connectors. In Section~\ref{sec:search}, we present how a hypothesis for its construction was motivated by running a numerical optimization of unit width polygons for minimal length with iterative refinement through addition of vertices.
After obtaining a hypothesis in the form of a parametrized family of curves from tracing the shape of the best found polygons, we give the construction in Section~\ref{sec:curve}, isolate its exact parameters and prove local minimality of its length within that family in Section~\ref{sec:parameters}, and give a partially analytical, partially computer-assisted proof of its unit width in Section~\ref{sec:width}. 

Our main result is Theorem~\ref{thm:main}:

\begin{theorem}\label{thm:main}
The curve $\Gamma$ specified in Sections~\ref{sec:curve} and~\ref{sec:parameters}
has $w(\Gamma)=1$ and
\begin{equation}\label{eq:result}
\begin{split}
3.905124162243593023905759435469
 &< L(\Gamma)\\
 &<3.905124162243593023905759435470.
\end{split}
\end{equation}
It is a strict local minimum of $L/w$ within the three-parameter joining
family defined below. Consequently
$m_{\rm open}<3.905124162243594$.
\end{theorem}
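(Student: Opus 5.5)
The theorem splits into three independent claims, which I will prove in the order the paper is organized: the parameter system and the length enclosure, local minimality within the joining family, and unit width. Once $w(\Gamma)=1$ and the length enclosure hold, the final bound follows at once: $\Gamma$ is rectifiable with positive width, so $m_{\rm open}\le L(\Gamma)/w(\Gamma)=L(\Gamma)<3.905124162243594$.

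\textbf{Step 1: parameters and length.} The curve of Section~\ref{sec:curve} is a concatenation of eight segments, two cone geodesics and three helices, with closed-form lengths. The joining conditions (continuity at the junctions, tangency where required, and the symmetry $T$) cut the configuration space down to a three-parameter family $p\mapsto\Gamma_p$. On this family $L(p)$ is an explicit analytic function, composed of square roots, arclengths of helices $\sqrt{r^2+h^2}\,\theta$, and cone-geodesic lengths obtained by unrolling the cone. I will write down the stationarity system $\nabla L(p)=0$ together with the joining equations. Then I apply an interval Newton/Krawczyk test on a small box around the numerical solution: the Krawczyk operator maps the box strictly into its interior, which certifies existence and uniqueness of a solution $p^*$ in the box. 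This is the claimed \emph{uniquely isolated} solution. Evaluating $L$ in interval arithmetic on the final tight box gives the enclosure \eqref{eq:result} to 30 digits, which requires multiprecision intervals.

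\textbf{Step 2: strict local minimality.} I will evaluate the Hessian of $L$ at $p^*$ over the certified box in interval arithmetic and show it is positive definite, via a Cholesky factorization with interval entries or positive leading principal minors. This also has to cover any Lagrange structure if the unit-width normalization ties a parameter. Because every member of the family is designed to have width $1$ in the active directions, $L/w=L$ locally on the family, so this gives a strict local minimum.

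\textbf{Step 3: width exactly one.} This is the main obstacle. The width is at most $1$ because the active antipodal configurations, for instance the two planes through the endpoints and the helix axis supports, give directional width exactly $1$ at $p^*$ by the joining equations. For the lower bound I must show $w_\nv(\Gamma)=\max\ip\nv\Gamma-\min\ip\nv\Gamma\ge1$ for every unit $\nv$.

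By the symmetry $T$ and $\nv\mapsto-\nv$, I reduce to a fundamental domain of $S^2$. I then split this domain into two kinds of region:
\begin{itemize}
\item Neighbourhoods of the finitely many directions where the width equals $1$. Here I use an analytic argument: the support function of each arc is explicit, so $w_\nv$ has a local expansion, and I show its first-order variation is nonnegative and its second-order term positive. This works because the curve is tangent to the slab in a nondegenerate way, as for Zalgaller's helix estimate.
\item The remaining compact set of directions, where $w_\nv-1$ is bounded below by a positive constant. Here I cover the set by finitely many spherical boxes. On each box I enclose $\max$ and $\min$ of $\ip\nv{\cdot}$ over each arc: segments reduce to their endpoints; for helices and cone geodesics I use interval enclosures of the projected trigonometric functions, bisecting in the arc parameter. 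The box passes once the lower bound of the difference of chosen witness arcs exceeds $1$.
\end{itemize}

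The hardest part is making these two regions meet. The analytic neighbourhoods must be explicitly quantified, with radii derived from interval bounds on the Taylor remainders, so that the interval cover only needs to handle directions with a uniform margin, and I would tune the neighbourhood radii first. The Lean formalization would then check the finite case list and the analytic inequalities.
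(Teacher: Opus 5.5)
\textbf{Steps 1 and 2 are fine; the real gap is the lower bound in Step 3.} Your Steps 1 and 2 follow the paper's route. That route is a Krawczyk-type contraction isolating the combined joining/stationarity system on a box, followed by interval bounds on the reduced Hessian. Your closing deduction for $m_{\rm open}$ is also fine.

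In Step 3 you assume that the directions with $W(\nv)=\norm\nv$ are finitely many, and that each is a nondegenerate minimum of the directional width. For this curve that is false. As the shortcut argument predicts, every point of a curved piece is a support point of a slab of width one.
\begin{itemize}
\item For $\nv=(0,\sin\theta,\cos\theta)$ with $|\theta|\le\beta$, the central helix projects onto the unit circle in the $yz$-plane. So $\ip{\nv}{H_c(\theta)}=1$ while $\ip{\nv}{A}=\ip{\nv}{D}=0$, and $W(\nv)=1$ on a whole arc of directions.
\item The same happens for directions perpendicular to the outer-helix axis. There $H(\theta)$ and $H^*(-\theta)$ are antipodal support points with $H(\theta)-H^*(-\theta)=-\cos\theta\,\vH+\sin\theta\,\ez$.
\item It happens again for the cone normals $\Nv(\theta)$, $\Theta\le\theta\le0$. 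There $D$ and the point of $F$ on the generator $\gv(\theta)$ lie on one supporting plane, and $C$ lies on the other.
\end{itemize}

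Hence $W(\nv)-\norm\nv$ vanishes along one-dimensional arcs of $S^2$, and its second variation is zero along them. No Taylor expansion about isolated points certifies a neighbourhood of such an arc. Your interval cover cannot approach these arcs either, because the margin there is zero.

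\textbf{What is missing.} You need explicit lower bounds valid on whole wedges containing each critical arc, built from the geometry of each piece. The paper supplies three.
\begin{itemize}
\item Central region: it combines the unit-circle support of $H_c$ with $A$ or $D$, gaining $(a-b)|n_x|\ge(\sqrt2-1)|n_x|$. For direction angles beyond $\pm\beta$ it uses $C$ and $B$ instead, via concavity.
\item Paired-helix region: it uses $H(\theta-\delta)$ and $H^*(-\theta-\delta)$, continued along the tangent segments past the arc ends. Their difference projects to $r\cos\delta+2hx\delta\ge\norm\nv$ because $2h\ge1$.
\item Cone region: it uses that the circular arc $\overrightarrow{DC}-\gv(\theta)$ and the tangent continuations $Q_A,Q_E$ lie in $\mathcal K-\mathcal K$, followed by a planar tangent-line comparison.
\end{itemize}
These wedges are cut out by homogeneous linear inequalities, so they fit into the triangulation via Lemma~\ref{lem:cover}. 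Only their complement then needs support pairs with a strictly positive margin.

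\textbf{A smaller gap in the upper bound.} The construction places $A,C$ on $\ip{\mv}{\cdot}=\tfrac12$ and $B,D$ on $\ip{\mv}{\cdot}=-\tfrac12$. That only gives $W(\mv)\ge1$. To get $W(\mv)=1$, and hence $w\le1$, you must show the whole curve lies in this slab. Do this by bounding $\ip{\mv}{\cdot}$ on the cone geodesic and on the helices; the segments then follow by convexity.

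This containment must be checked for every family member near the root, not only at $p^*$. It gives $w\le1$, hence $L/w\ge L$, on nearby members, which is what turns strict minimality of $L$ into strict minimality of $L/w$. Your stronger claim $L/w=L$ would additionally require the full lower-bound argument across the family.
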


Finally, in Section~\ref{sec:repro} we discuss supplementary material for reproducibility. We separately supply a formalized proof in Lean~4~\cite{Lean4} of our main-result that was generated using ChatGPT 6 Astra and verified to compile and match the main result statement up to slightly looser numerical bounds due different computation precision, namely $3.90512404<L(\Gamma)<3.90512429$. This upper bound is still below the reported length of $L_3$ in~\cite{Zalgaller}.
The sharper interval~\eqref{eq:result} can be reproduced using separately supplied Python programs. 
Figure~\ref{fig:overview} shows the curve in four views.

\section{Numerical Search for Candidate Curve}
\label{sec:search}
In this section we outline the search for the curve for which we give the exact construction, length, width and local optimality proof in the later sections. 
Because we give the full programs in supplementary material to allow replication by rerunning and the proofs are independent of how the candidate curve family was identified, we will keep this part brief.

Our strategy to obtain ideas for the structure of the minimal length curve of unit width was as follows. We first ran a numerical search for the minimizer among polygonal chains, then heuristically derived a hypothesis for an analytical construction from it, whose parameters were then again numerically optimized. 

We remember that every rectifiable curve can be approximated arbitrarily well through finite polygonal chains, i.e. chains of connected straight line segments.
We minimize the length to width ratio of polygonal chains over the vertex coordinates as optimization variables from several random starts, in case multiple local minima exist, and with increasingly refining vertex count.

The optimization problem is given as follows. For an ordered open polygon $P=(p_0,\ldots,p_{N-1})$, set
\[
 L(P)=\sum_{i=0}^{N-2}\norm{p_{i+1}-p_i},\qquad
 w(P)=\min_{\norm\nv=1}
 \bigl(\max_i\ip\nv{p_i}-\min_i\ip\nv{p_i}\bigr).
\]
The supplementary implementation \path{vertex_search/search.py} minimizes $L(P)/w(P)$
by sequential convex optimization. For each step, a convex
problem proposes a vertex displacement, and exact length and width bounds
determine whether to accept it. At the current polygon, it collects
support configurations with widths at most $(1+\varepsilon)w(P)$,
where $\varepsilon=0.002$. Let $w_j$ be the corresponding smooth feature
widths and $\mathcal I(P)$ be those with numerically usable derivatives. With vertex
displacements $\Delta=(\Delta_0,\ldots,\Delta_{N-1})$, solve
\begin{equation}\label{eq:polygon-step}
 \begin{aligned}
 \min_{\Delta}\quad&
   \sum_{i=0}^{N-2}\norm{p_{i+1}+\Delta_{i+1}-p_i-\Delta_i}
   +\frac{\eta}{2}\norm{\Delta}_2^2,\\
 \text{subject to}\quad&
   w_j(P)+D w_j(P)[\Delta]\ge w(P),\qquad j\in\mathcal I(P),\\
 &\norm{\Delta}_\infty\le\tau.
 \end{aligned}
\end{equation}
The quadratic penalty with coefficient $\eta=0.01$ discourages large steps, and the $\tau$ bounds each coordinate displacement.
Clarabel~\cite{Clarabel} solves an equivalent conic formulation with a quadratic objective. The Python implementation~\cite{Python} uses NumPy and
SciPy~\cite{NumPy,SciPy} for arrays, sparse matrices and linear algebra.

The linearized constraints may not preserve true width, so we recompute the width of each accepted curve exactly.
Coordinates are recentered and rounded to $2^{-32}\mathbb Z^3$. As we noted in the introduction, the width of a curve is the width of its convex hull. We use Qhull~\cite{Qhull} to obtain the triangulation of a convex hull and a fast C++ implementation to compute the convex hull's width using the approach of 
\cite{HouleToussaint}. By computing squared widths, we can guarantee exact results using integer arithmetic. Because the required precision requires big integer types whose operations are slow, we speed up the check with generated, semi-static
polynomial-sign filters from the implementation given in~\cite{Filters}. The filters certify polynomial signs used to identify antipodal features and compare squared widths using fast floating-point calculations. When the semi-static filter cannot make a guarantee, we fall back to exact big integer calculations. The implementation uses Boost.Geometry's vector operations
and Boost.Multiprecision's integer arithmetic~\cite{BoostGeometry,BoostMultiprecision}.

After optimization stops at a given vertex count, we refine the polygonal chain by adding additional vertices along existing segments. We add a slight displacement from the existing edges, so that newly added vertices' objective derivatives and constraints are not inactive by construction.

Independent instances can start from random Gaussian vertices or supplied
polygons. For half-turn or mirror symmetry, the implementation imposes
$p_{N-1-i}=Rp_i$, with $R=\operatorname{diag}(-1,-1,1)$ or
$R=\operatorname{diag}(-1,1,1)$, respectively. These runs use odd vertex
counts, add the corresponding linear equalities to the displacement problem,
and bisect edges in pairs. Coordinate rounding preserves the symmetry exactly.

The supplied population consists of 32 Gaussian starts: 12 unconstrained,
eight retaining half-turn symmetry, four retaining mirror symmetry, and four
releasing each symmetry. Seeds are $0$ through $31$. The 
initial vertex counts cycle through $9,13,17,25$ within each group. Vertex counts increase
by $N\mapsto2N-1$, capped at 3,999. The best run retained half-turn
symmetry and used seed $16$.
Its final polygon has the ratio interval bounds
\begin{equation}\label{eq:polygon-result}
 \frac{L(P)}{w(P)}\in
 [3.905124224044689335,\;3.905124224044689336].
\end{equation}
Twenty-six runs attained ratios in $[3.90512422,3.90512563]$:
all twelve unconstrained, six retaining half-turn symmetry, and all eight
releasing symmetry. After width
normalization, rigid alignment allowing reflections, and optional path
reversal, their root-mean-square distances to the best polygon, measured at
1,024 equally spaced arclength samples, were below $8.228\cdot10^{-6}$, which we interpreted as all competitive runs approximating the same local minimum. Seeds, compiler settings, stage vertices and results are in the supplementary material.

Subsequences of these chains are then inspected heuristically, to identify patterns that suggest approximation of elementary geometric shapes, e.g. near-collinearity of vertex sequences suggest straight line segments, approximately constant non-zero curvature and constant torsion suggest circular helices, and straightness after unfolding a fitted cone suggests cone geodesics.
Polygonal chains with additional symmetry constraints producing length over width ratios that remain competitive to unconstrained chains after refinement. This suggests that the minimizer has that symmetry.

After identifying an analytical candidate family, discussed in the next section, we implement a second length optimization over its parameters. For that, we replaced vertices by exact formulas for these pieces.
The unit separation of the parallel planes through $AC$ and $BD$ determines $t$
from $a,b$, and tangent joining leaves a local three-parameter family.
Using the eight-coordinate
formulation of Section~\ref{sec:parameters}, we minimized its analytic
length with SciPy's equality-constrained sequential least-squares programming (SLSQP) solver, then polished the joining and
stationarity equations at high precision with mpmath~\cite{Mpmath}.
The resulting endpoint parameters are
\[
 (a,b,c)\simeq(0.712497889,0.127319787,0.823130967).
\]

If this family contains a curve that is a local optimum approximated by the polygonal chain, we expect its length to be slightly lower than the best polygonal chain candidate due to the approximation error of polygonal chains. This is what we found in our testing. The selected curve's length is $3.905124162243593\ldots$, about
$6.18\cdot10^{-8}$ below the ratio of the best polygon in~\eqref{eq:polygon-result} and outside its interval bounds. Sections~\ref{sec:parameters} and~\ref{sec:width}
respectively isolate an exact stationary parameter tuple and certify
every supporting direction.

\section{Exact construction and length}\label{sec:curve}
Because the necessary number of parameters and notation is considerable, we will start this section with a full parametrization of the curve with approximate parameters to give an overview and reference that we can refer back to from every following section together with Figure~\ref{fig:overview} that has every joining point labeled and gives different projections.

\begin{figure}[H]
\centering
\begin{subfigure}[t]{0.5\linewidth}
\includegraphics[width=\linewidth]{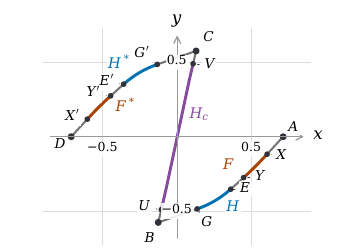}
\caption{$xy$ projection}
\end{subfigure}%
\begin{subfigure}[t]{0.5\linewidth}
\includegraphics[width=\linewidth]{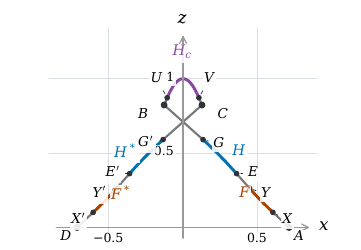}
\caption{$xz$ projection}
\end{subfigure}
\par\vspace{12pt}
\begin{subfigure}[t]{0.5\linewidth}
\includegraphics[width=\linewidth]{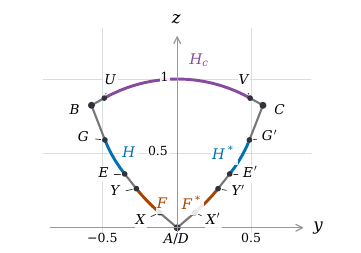}
\caption{$yz$ projection}
\end{subfigure}%
\begin{subfigure}[t]{0.5\linewidth}
\includegraphics[width=\linewidth]{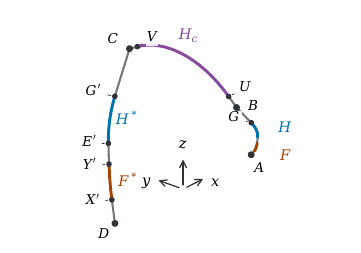}
\caption{Orthographic view}
\end{subfigure}
\par\vspace{2pt}
\includegraphics[width=\linewidth]{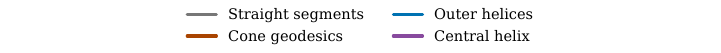}
\caption{Four views of the same thirteen-piece curve. All panels use the same
length scale and orthogonal projection. The label $A/D$ marks the coincident
free endpoints in the $yz$ projection. The labels
$X,Y,E$ are omitted from the orthographic view to avoid crowding.
Colors distinguish the eight
straight pieces, two cone geodesics, two outer helices and central helix.
Here $H_c$ denotes the central helix. Primes on points and stars on arcs
denote half-turn images}
\label{fig:overview}
\end{figure}

We give the notation for the following in Table~\ref{tab:notation}.
\begin{table}[!htbp]
\centering
\caption{Notation for the curve and its certificate}\label{tab:notation}
\small
\renewcommand{\arraystretch}{1.15}
\begin{tabular}{@{}p{0.37\linewidth}p{0.59\linewidth}@{}}
\toprule
Notation & Meaning\\
\midrule
Plain capitals & Named points in $\R^3$: $A,D$ are endpoints, $B,C$ are corners, and $X,Y,E,G,U,V$ are joins.\\
$R,\ P',\ F^*$ & Half-turn $R(x,y,z)=(-x,-y,z)$, images $P'=R(P)$ and $F^*=R(F)$.\\
\midrule
$F,H,H_c$ & Cone geodesic, outer helix, central helix.\\
$\mathcal A,\mathcal C,\Gamma$ & One outer arm, the central portion, the complete curve.\\
$\eH,\vH,\ez$ & Axis, horizontal (perpendicular to the axis) and vertical unit vectors for $H,H^*$.\\
$\eF,\vF,\wF$ & Axis, radial and circumferential unit vectors for the cone carrying $F$, at entry.\\
$\Tv_A,\Tv_\pm,\Tv_c,\bar\Tv,\ \Nv,\nv,\mv$ & Unit tangents, cone normal, general support normal, and unit slab normal.\\
$\gv,\av_\theta,\mathbf Q$ & Cone generator, azimuthal tangent and force vectors.\\
$\mathcal Q$ & Scalar-valued bilinear form defining the cone.\\
\bottomrule
\end{tabular}
\end{table}
\FloatBarrier

The curve, in order from $A$ to $D$, is
\begin{equation}\label{eq:wholecurve}
 \boxed{\quad
 \begin{gathered}
 \Gamma=\mathcal A*\mathcal C*R(\rev\mathcal A),\\
 \mathcal A=[A,X]*F*[Y,E]*H*[G,B],\qquad
 \mathcal C=[B,U]*H_c*[V,C].
 \end{gathered}\quad}
\end{equation}
Here $[P,Q]$ denotes the oriented segment from $P$ to $Q$,
$*$ concatenation, and $\rev$ reversal of paths.
Take five parameters $(a,b,c,\alpha,u)$, to be selected later in
Section~\ref{sec:parameters}. The points have coordinates
\begin{equation}\label{eq:endpoints}
\begin{gathered}
 \sigma=\sqrt{4a^2-1},\qquad t=(a-b)/\sigma,\qquad \ez=(0,0,1),\\
 A=(a,0,0),\quad B=(-b,-t,c),\quad C=(b,t,c),\quad D=(-a,0,0),\\
 X=D+r_0\gv_0,\qquad Y=E-\lambda\Tv_-,\\
 E=h\alpha\eH-\tfrac12\cos\alpha\,\vH
       +(z_0-\tfrac12\sin\alpha)\ez,\\
 G=-h\alpha\eH-\tfrac12\cos\alpha\,\vH
       +(z_0+\tfrac12\sin\alpha)\ez,\\
 U=(-k_c\beta,-\sin\beta,\cos\beta),\qquad V=R(U),\\
 R(x,y,z)=(-x,-y,z).
\end{gathered}
\end{equation}
The helix quantities used in these coordinates are
\begin{subequations}\label{eq:plot-auxiliaries}
\begin{equation}\label{eq:reconstruction}
\begin{gathered}
 \kappa=\tfrac12(\cos\alpha-u\sin\alpha),\qquad
 \rho=\sqrt{b^2+t^2-\kappa^2},\\
 h=\frac{\rho}{\alpha+u},\qquad
 z_0=c-\tfrac12(\sin\alpha+u\cos\alpha),\\
 \eH=\frac{(\rho b+\kappa t,\,\rho t-\kappa b,\,0)}{b^2+t^2},
 \qquad \vH=(-e_{H,y},e_{H,x},0),
\end{gathered}
\end{equation}
\begin{equation}\label{eq:helix-tangents}
 q_H=\sqrt{h^2+1/4},\qquad
 \Tv_\pm=\frac{-h\eH\pm\tfrac12\sin\alpha\,\vH+\tfrac12\cos\alpha\,\ez}{q_H},
\end{equation}
\begin{equation}\label{eq:central-data}
 p=\sqrt{t^2+c^2-1},\qquad
 \beta=\atan(t,c)-\arctan p,\qquad k_c=\frac b{\beta+p}.
\end{equation}
\end{subequations}
Here $\overrightarrow{PQ}=Q-P$, all angles are in radians, and
$\atan(y,x)\in(-\pi,\pi]$ is the two-argument arctangent.
Bars denote development (unrolling) into the plane.
The three curved pieces are the following functions of one variable:
\begin{equation}\label{eq:coneparam}
\begin{gathered}
 \bar Z(s)=((1-s)r_0+sr_1\cos(\mu\Theta),\;sr_1\sin(\mu\Theta)),\\
 F(s)=D+\norm{\bar Z(s)}\,
 \gv\!\left(\frac{\atan(\bar Z_y(s),\bar Z_x(s))}{\mu}\right),
 \qquad 0\le s\le1,
\end{gathered}
\end{equation}
\begin{equation}\label{eq:outer}
 H(\theta)=z_0\ez-h\theta\eH-\tfrac12\cos\theta\,\vH
                  +\tfrac12\sin\theta\,\ez,
 \qquad -\alpha\le\theta\le\alpha,
\end{equation}
\begin{equation}\label{eq:central}
 H_c(\theta)=(k_c\theta,\sin\theta,\cos\theta),
 \qquad -\beta\le\theta\le\beta.
\end{equation}
The cone quantities used in~\eqref{eq:endpoints} and~\eqref{eq:coneparam}
are grouped below by their role: frame, generators, tangency, exit data,
and development (unrolling) into the plane.
\begin{subequations}\label{eq:cone-auxiliaries}
\begin{equation}\label{eq:cone_frame}
\begin{gathered}
 d=\sqrt{\lVert\overrightarrow{DC}\rVert^2-1},\qquad
 \mu=\frac1{\lVert\overrightarrow{DC}\rVert},\qquad
 \nu=\frac d{\lVert\overrightarrow{DC}\rVert},\\
 \Nv_0=\left(0,\frac{t+pc}{t^2+c^2},\frac{c-pt}{t^2+c^2}\right),
 \qquad \gv_0=\frac{\overrightarrow{DC}-\Nv_0}{d},\\
 \eF=\frac{\overrightarrow{DC}}{\lVert\overrightarrow{DC}\rVert},\qquad
 \vF=\frac{\gv_0-\nu\eF}{\mu},\qquad \wF=\eF\times\vF,
\end{gathered}
\end{equation}
\begin{equation}\label{eq:cone-generators}
 \gv(\theta)=\nu\eF+\mu(\cos\theta\,\vF+\sin\theta\,\wF),\qquad
 \av_\theta=-\sin\theta\,\vF+\cos\theta\,\wF,
\end{equation}
\begin{equation}\label{eq:cone-tangency-data}
 \mathcal Q(\mathbf r,\mathbf s)
 =\ip{\mathbf r}{\overrightarrow{DC}}\ip{\mathbf s}{\overrightarrow{DC}}
       -d^2\ip{\mathbf r}{\mathbf s},\qquad
 \lambda=\frac{\mathcal Q(\overrightarrow{DE},\Tv_-)}{\mathcal Q(\Tv_-,\Tv_-)},
\end{equation}
\begin{equation}\label{eq:Y-r1-Theta-definition}
 r_1=\lVert\overrightarrow{DY}\rVert,\qquad
 \Theta=\atan(\ip{\overrightarrow{DY}}{\wF},\ip{\overrightarrow{DY}}{\vF}),
\end{equation}
\begin{equation}\label{eq:development}
 \bar A=(\ip{A-D}{\gv_0},\ip{A-D}{\wF}),\qquad
 \bar Y=r_1(\cos(\mu\Theta),\sin(\mu\Theta)),
\end{equation}
\begin{equation}\label{eq:developed-tangent}
 \bar\Tv=
 \begin{pmatrix}\cos(\mu\Theta)&-\sin(\mu\Theta)\\
                 \sin(\mu\Theta)&\cos(\mu\Theta)\end{pmatrix}
 \begin{pmatrix}\ip{\Tv_-}{\gv(\Theta)}\\\ip{\Tv_-}{\av_\Theta}\end{pmatrix},
\end{equation}
\begin{equation}\label{eq:cone-entry}
 r_0=\bar A_x-\bar A_y\frac{\bar T_x}{\bar T_y},\qquad \bar X=(r_0,0).
\end{equation}
\end{subequations}
Compute $E$ from~\eqref{eq:endpoints} before $\lambda$, then $Y$ before
$r_1,\Theta$, and finally $X$ once $r_0$ is known.
Here $F$ runs from $X$ to $Y$, $H$ from $E$ to $G$, and $H_c$ from
$U$ to $V$. The selected parameters are approximately
\[
 \begin{gathered}
 (a,b,c)\simeq(0.7125,\;0.1273,\;0.8231),\quad
 (\alpha,u)\simeq(0.2308,\;0.4784).
 \end{gathered}
\]
The reasoning for these choices and relationships follows from continuity and tangency constraints discussed in the remainder of this section, meaning that parametrized pieces must meet segments with matching tangent directions at the respective connection points, and the final parameter choices follow from analytical and numerical length minimization arguments to be given in  Section~\ref{sec:parameters}.
The curve has thirteen pieces, with $C^1$ joins except at $B$ and $C$.
Writing $P'=R(P)$ for half-turn images, the join-point order is
\[
 A,\ X,\ Y,\ E,\ G,\ B,\ U,\ V,\ C,\ G',\ E',\ Y',\ X',\ D.
\]
Figure~\ref{fig:overview} identifies these pieces.

\subsection{Endpoints and the outer helix}
Since $\sigma t=a-b$, the points $A,C$ lie in $x+\sigma y=a$ and
$B,D$ lie in $x+\sigma y=-a$. These planes have distance 1:
\begin{equation}\label{eq:a-sigma}
 \frac{2a}{\sqrt{1+\sigma^2}}=\frac{2a}{\sqrt{4a^2}}=1,
 \qquad a>\tfrac12.
\end{equation}
We will later show in Section~\ref{sec:width} that the curve is contained in this slab, which implies that its width is at most $1$. 

The outer helix pitch, height and frame are given in~\eqref{eq:reconstruction}.
On the selected branch, $\rho>0$ and $\alpha+u>0$. The identities
\[
 \norm{\eH}^2=\frac{\rho^2+\kappa^2}{b^2+t^2}=1,
 \qquad \rho\eH+\kappa\vH=(b,t,0)
\]
give an orthonormal frame $(\eH,\vH,\ez)$.
The vectors $\Tv_-,\Tv_+$ in~\eqref{eq:helix-tangents} are the unit
tangents at $E,G$, respectively.
The helix has radius $1/2$ around the line $z_0\ez+\R\eH$ and is shown in Figure~\ref{fig:outer-cylinder}. The other
outer helix is its half-turn image. By substitution, we get the tangent join to $B$:
\begin{equation}\label{eq:joinB}
 G+uH'(\alpha)=-\rho\eH-\kappa\vH+c\ez=B.
\end{equation}

\begin{figure}[H]
\centering
\begin{subfigure}[t]{0.5\linewidth}
\includegraphics[width=\linewidth]{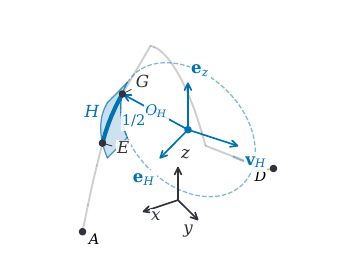}
\caption{Frame and radius}
\end{subfigure}%
\begin{subfigure}[t]{0.5\linewidth}
\includegraphics[width=\linewidth]{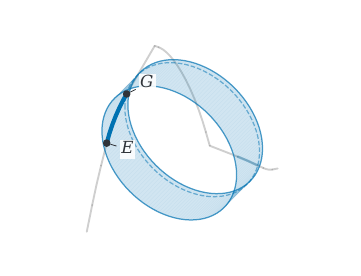}
\caption{Curve and supporting surface}
\end{subfigure}
\caption{The cylinder supporting $H$, from $E$ to $G$, in orthographic view.
Its surface is translucent and $H$ is highlighted. $H^*$, on the reverse arm of the curve, is faintly
colored near the label for $\vH$ and the rest of the curve is gray.
(a) The radius $[O_H,G]$ has length $1/2$, where
$O_H=z_0\ez-h\alpha\eH$. The colored arrows show
$(\eH,\vH,\ez)$ at $O_H$. Black coordinate arrows start at $(0,0,0)$.
Frame arrows are scaled for visibility.
In (b), we show a full cylinder piece for reference and only label the vertices of the curve, $E,G$, where the curve enters and leaves the cylinder}
\label{fig:outer-cylinder}
\end{figure}

\subsection{The central helix}
The central quantities $p,\beta,k_c$ are given in~\eqref{eq:central-data}, see Figure~\ref{fig:central-cylinder}.
In the $yz$-plane, $p$ is the tangent length from $(t,c)$ to the unit
circle. To check the joins, $t^2+c^2=1+p^2$ and
$\atan(t,c)=\beta+\arctan p$ give
\begin{equation}\label{eq:central-trig}
 t=\sin\beta+p\cos\beta,\qquad
 c=\cos\beta-p\sin\beta,\qquad b=k_c(\beta+p).
\end{equation}
Therefore
\[
 C-H_c(\beta)
 =(b-k_c\beta,\,t-\sin\beta,\,c-\cos\beta)
 =p(k_c,\cos\beta,-\sin\beta)=pH_c'(\beta).
\]
Similarly $H_c(-\beta)-B=pH_c'(-\beta)$, so both joins are tangent.

\begin{figure}[H]
\centering
\begin{subfigure}[t]{0.5\linewidth}
\includegraphics[width=\linewidth]{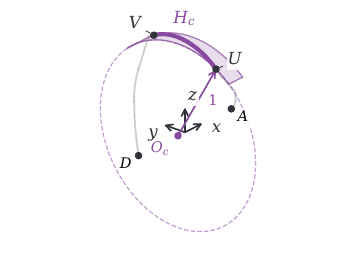}
\caption{Frame and radius}
\end{subfigure}%
\begin{subfigure}[t]{0.5\linewidth}
\includegraphics[width=\linewidth]{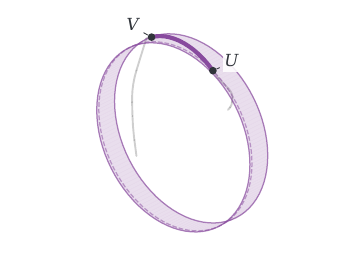}
\caption{Curve and supporting surface}
\end{subfigure}
\caption{The cylinder supporting $H_c$, from $U$ to $V$, in
orthographic view. Its surface is translucent and the curve piece $H_c$ is
highlighted. The rest of the curve is gray.
(a) The radius $[O_c,U]$ has length one, where $O_c=(-k_c\beta,0,0)$.
Black coordinate arrows start at $(0,0,0)$ and show the Cartesian frame,
with the $x$-direction along the cylinder axis.
In (b), we show a full cylinder segment for reference and only label the points where the curve enters and leaves the cylinder, $U,V$}
\label{fig:central-cylinder}
\end{figure}

\subsection{The cone and its development}
Next, we will construct the cone and its geodesics, shown in Figure~\ref{fig:cone}.
The frame and generators are given in~\eqref{eq:cone_frame}
and~\eqref{eq:cone-generators}.
Here $d$ is the tangent length from $D$ to the unit sphere centered at $C$.
$\mu,\nu$ are the sine and cosine of the cone's half-angle.
The vectors $(\eF,\vF,\wF)$ form an orthonormal frame. The circular cone $D+r \gv(\theta)$ has
unit inward normal
$\Nv(\theta)=\mu \eF-\nu(\cos\theta\,\vF+\sin\theta\,\wF)$.
Since $\mu^2+\nu^2=1$ and $\overrightarrow{DC}=\lVert\overrightarrow{DC}\rVert\eF$, it satisfies
\[
 \norm{\Nv(\theta)}^2=1,\qquad
 \ip{\Nv(\theta)}{\overrightarrow{DC}}=\mu\lVert\overrightarrow{DC}\rVert=1,\qquad
 \ip{\Nv(\theta)}{\gv(\theta)}=\mu\nu-\nu\mu=0.
\]
Thus every cone tangent plane passes through $D$ and is tangent to the
unit sphere centered at $C$.
Its entry tangent plane contains $AD$, since $(\Nv_0)_x=0$.

The quantities $\mathcal Q$ and $\lambda$ in~\eqref{eq:cone-tangency-data}
reduce cone tangency and unfolding to scalar operations.
The first cone-joining condition is
\begin{equation}\label{eq:conetangent}
 \mathcal Q(\overrightarrow{DE},\Tv_-)^2
 -\mathcal Q(\Tv_-,\Tv_-)\mathcal Q(\overrightarrow{DE},\overrightarrow{DE})=0.
\end{equation}
The cone is given by $\mathcal Q(Z-D,Z-D)=0$ on its positive
sheet, and intersection with the line $E-s\Tv_-$ gives the quadratic
\begin{align*}
 \mathcal Q(\overrightarrow{DE}-s\Tv_-,\overrightarrow{DE}-s\Tv_-)
 &=\mathcal Q(\overrightarrow{DE},\overrightarrow{DE})-2s\mathcal Q(\overrightarrow{DE},\Tv_-)\\
 &\quad+s^2\mathcal Q(\Tv_-,\Tv_-).
\end{align*}
Equation~\eqref{eq:conetangent} makes its discriminant zero. Its double
root is $s=\lambda$, since the quadratic coefficient is
nonzero on the selected branch.
The exit data $r_1,\Theta$ are given in~\eqref{eq:Y-r1-Theta-definition}.
In the selected branch $\Theta<0$, $\ip{\overrightarrow{DY}}{\eF}>0$ and
$Y=D+r_1\gv(\Theta)$.
Since $\gv'(\theta)=\mu\av_\theta$, with
$\norm{\gv}=\norm{\av_\theta}=1$ and $\ip{\gv}{\av_\theta}=0$,
\[
 \norm{\gv(\theta)\,dr+r\gv'(\theta)\,d\theta}^2
 =dr^2+\mu^2r^2d\theta^2.
\]
Thus unfolding preserves radial distances and replaces the azimuth
$\theta$ by $\mu\theta$. Place the entry generator, i.e. the ray from the cone peak $D$ through $X$, along the positive
horizontal axis and flatten its tangent plane in the basis
$(\gv_0,\wF)$. This gives the developed points in~\eqref{eq:development}.
Here $(\gv(\Theta),\av_\Theta)$ is an orthonormal basis of the tangent
plane at $Y$. The rotation in~\eqref{eq:developed-tangent} expresses $\Tv_-$
in the developed plane. The point $\bar X$ is where the line
$\bar A+s\bar\Tv$ meets the horizontal axis. Its second coordinate
vanishes at $s=-\bar A_y/\bar T_y$, giving the formula for $r_0$ in~\eqref{eq:cone-entry}.
The second cone-joining condition is
\begin{equation}\label{eq:unfoldjoin}
 \det(\bar Y-\bar A,\bar \Tv)=0.
\end{equation}
It makes the developed entry segment and cone path lie on the same line
as the developed outgoing tangent. The segment
$\bar Z(s)=(1-s)\bar X+s\bar Y$ develops back to the cone geodesic
$F$ in~\eqref{eq:coneparam}, with $X=F(0)$ and $Y=F(1)$.
Nonzero denominators, positive radii and tangent orientations are checked
on the box~\eqref{eq:box} by interval arithmetic in
\path{supplement/validate_symbolic.py}. \path{supplement/check_paper.py}
checks that this box lies inside the program's validated box.
All Lean files cited below are in the companion project and are checked
by its driver \texttt{verify\_all.py}.

\begin{figure}[H]
\centering
\begin{subfigure}[t]{0.5\linewidth}
\includegraphics[width=\linewidth]{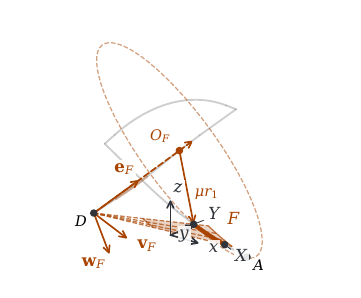}
\caption{Frame and radius}
\end{subfigure}%
\begin{subfigure}[t]{0.5\linewidth}
\includegraphics[width=\linewidth]{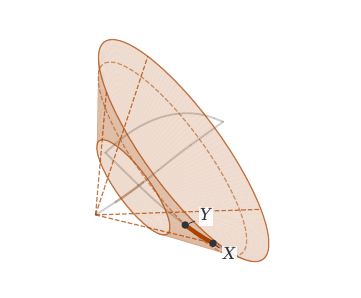}
\caption{Curve and supporting surface}
\end{subfigure}
\caption{The cone supporting $F$, from $X$ to $Y$, in orthographic view.
It has cone peak $D$ and is shown between $r=3/5$ and $r=7/5$. Panel (a)
is restricted to $\Theta\le\theta\le0$, the angular interval traversed by the highlighted curve piece $F$.
The half-turn image $F^*$ of $F$ is faintly colored near the label of $\eF$, better visible in (b), and the rest of the curve is gray.
In (a), the circular-section radius $[O_F,Y]$ has length $\mu r_1$, where
$O_F=D+r_1\nu\eF$. The dashed circle shows the full cross-section through $Y$.
The colored arrows show $(\eF,\vF,\wF)$ at the cone peak $D$.
black coordinate arrows start at $(0,0,0)$. Frame arrows are scaled for visibility.
In (b), we show a full cone slice for reference and label only the vertices where the curve enters and leaves $X,Y$}
\label{fig:cone}
\end{figure}

\begin{proposition}\label{prop:length}
On this joining branch the length is
\begin{equation}\label{eq:length}
 L=2\left(\norm{\bar Y-\bar A}+\lambda+(2\alpha+u)q_H\right)
       +2\sqrt{b^2+(\beta+p)^2}.
\end{equation}
\end{proposition}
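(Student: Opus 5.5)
The plan is to use the decomposition~\eqref{eq:wholecurve} together with additivity of length under concatenation. Since $R$ is an isometry and reversal preserves length, $L(\Gamma)=2L(\mathcal A)+L(\mathcal C)$. It then suffices to show two things: that $L(\mathcal A)=\norm{\bar Y-\bar A}+\lambda+(2\alpha+u)q_H$, and that $L(\mathcal C)=2\sqrt{b^2+(\beta+p)^2}$.

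For the central portion we use the explicit parametrization~\eqref{eq:central}. Here $\norm{H_c'(\theta)}=\sqrt{k_c^2+1}$, so $L(H_c)=2\beta\sqrt{k_c^2+1}$. The tangent-join identities after~\eqref{eq:central-trig} give $C-V=pH_c'(\beta)$ and $U-B=pH_c'(-\beta)$. Hence each connecting segment has length $p\sqrt{k_c^2+1}$. Summing, we get $L(\mathcal C)=2(\beta+p)\sqrt{k_c^2+1}=2\sqrt{(\beta+p)^2+k_c^2(\beta+p)^2}$. The relation $b=k_c(\beta+p)$ from~\eqref{eq:central-trig} turns this into the claimed term.

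For the outer arm we treat the pieces in turn.
\begin{itemize}
\item \emph{Outer helix.} In~\eqref{eq:outer}, $H'(\theta)=-h\eH+\tfrac12\sin\theta\,\vH+\tfrac12\cos\theta\,\ez$. Its norm is the constant $q_H$ because the frame $(\eH,\vH,\ez)$ is orthonormal. Hence $L(H)=2\alpha q_H$.
\item \emph{Segment $[G,B]$.} By~\eqref{eq:joinB}, $B-G=uH'(\alpha)$. Its length is therefore $|u|q_H=uq_H$, using $u>0$ on the selected branch.
\item \emph{Segment $[Y,E]$.} Since $Y=E-\lambda\Tv_-$ with $\norm{\Tv_-}=1$, its length is $\lambda$. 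This needs $\lambda>0$, which the paper asserts is checked by interval arithmetic on the box as part of the ``tangent orientations''.
\item \emph{Segment $[A,X]$ and the geodesic $F$.} These are handled via the development. The map from the developed plane to the cone (radius preserved, azimuth divided by $\mu$) is a local isometry by the computation $dr^2+\mu^2r^2d\theta^2$ shown above. So $L(F)$ equals the Euclidean length of the developed segment $\bar Z$, namely $\norm{\bar Y-\bar X}$. Similarly, the segment $[A,X]$ lies in the entry tangent plane, spanned by $\gv_0,\wF$ through $D$; this plane contains $A$ because $(\Nv_0)_x=0$ and $\ip{\Nv_0}{A-D}=0$. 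Its flattening is an isometry onto the developed plane sending $X\mapsto\bar X$ and $A\mapsto\bar A$, so $\norm{X-A}=\norm{\bar X-\bar A}$.
\end{itemize}

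The main obstacle is the collinearity and betweenness step. We must show that $\norm{\bar X-\bar A}+\norm{\bar Y-\bar X}=\norm{\bar Y-\bar A}$. By construction, $\bar X$ lies on the line $\bar A+s\bar\Tv$ through~\eqref{eq:cone-entry}. The join condition~\eqref{eq:unfoldjoin} puts $\bar Y$ on the same line. So the three points are collinear, and we only need $\bar X$ to lie between $\bar A$ and $\bar Y$. I would establish this sign condition, namely that the parameter $s_X=-\bar A_y/\bar T_y$ lies strictly between $0$ and the parameter of $\bar Y$, from the interval-arithmetic orientation checks on the box~\eqref{eq:box} performed in \path{supplement/validate_symbolic.py}. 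Those checks also give $r_0>0$ and validity of the development over the angular range $\Theta\le\theta\le0$, which has width less than $2\pi$. That range keeps the development injective, so $\bar Z$ genuinely parametrizes $F$.

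Adding the pieces gives $L(\mathcal A)=\norm{\bar Y-\bar A}+\lambda+(2\alpha+u)q_H$. Combined with the central term, this yields~\eqref{eq:length}.
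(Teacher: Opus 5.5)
Your proposal is correct and follows the paper's proof essentially step by step: additivity over the pieces, constant-speed helices, the tangent leads $pH_c'(\pm\beta)$, $uH'(\alpha)$ and $\lambda\Tv_-$, and length preservation under development, with betweenness of $\bar X$ certified by interval signs. The paper states the betweenness condition as the three signs $\bar A_y>0$, $\bar Y_y<0$, $\bar T_y<0$, which give $\bar X-\bar A=-\frac{\bar A_y}{\bar T_y}\bar\Tv$ and $\bar Y-\bar X=\frac{\bar Y_y}{\bar T_y}\bar\Tv$ with positive multiples; this is exactly equivalent to your condition $0<s_X<s_Y$.
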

\begin{proof}
Development preserves the lengths of the incoming segment and cone
geodesic. From the joining equations and the signs
$\bar A_y>0$, $\bar Y_y<0$, $\bar T_y<0$
(checked in \path{OpenCurve/RootGraph/DevelopedJoins.lean}) we get
\[
 \bar X-\bar A=-\frac{\bar A_y}{\bar T_y}\bar\Tv,
 \qquad
 \bar Y-\bar X=\frac{\bar Y_y}{\bar T_y}\bar\Tv,
 \qquad \norm{\bar\Tv}=1.
\]
Both multiples are positive, so $\bar X$ lies between $\bar A$ and
$\bar Y$, so their two lengths sum to $\norm{\bar Y-\bar A}$.
For the remaining pieces of the arm, we recall from~\eqref{eq:endpoints}
$E-Y=\lambda\Tv_-$, from~\eqref{eq:joinB} $B-G=uH'(\alpha)$ and
\[
 H'(\theta)=-h\eH+\tfrac12\sin\theta\,\vH
                   +\tfrac12\cos\theta\,\ez,
 \qquad \norm{H'(\theta)}=\sqrt{h^2+\tfrac14}=q_H.
\]
The respective lengths are therefore $\lambda$, $2\alpha q_H$ and
$uq_H$. Thus
\[
 L(\mathcal A)=\norm{\bar Y-\bar A}+\lambda+(2\alpha+u)q_H.
\]
The half-turn image $R(\rev\mathcal A)$ has the same length. For the central portion, including its two tangent leads,
$\norm{H_c'}=\sqrt{1+k_c^2}$ and each tangent lead has length
$p\sqrt{1+k_c^2}$ by~\eqref{eq:central-trig}. Hence
\begin{align*}
 L(\mathcal C)&=(2p+2\beta)\sqrt{1+k_c^2}\\
 &=2(\beta+p)\sqrt{1+\frac{b^2}{(\beta+p)^2}}\\
 &=2\sqrt{b^2+(\beta+p)^2}.
\end{align*}
Adding $2L(\mathcal A)+L(\mathcal C)$ proves~\eqref{eq:length}.
\end{proof}

\section{Derivation and isolation of the parameters}\label{sec:parameters}
For differentiation and certification within the parameter space of the family of curves described in the previous section, we introduce the augmented coordinates
\[
 \xx=(a,b,c,\phi,z_0,h,\alpha,u),\qquad
 \phi=\atan(e_{H,y},e_{H,x}).
\]
We treat $h,z_0,\phi$ as independent variables and set
$\eH=(\cos\phi,\sin\phi,0)$.
Recall that the joining condition $G+uH'(\alpha)=B$ from~\eqref{eq:joinB}
provides three scalar equations. Together with
\eqref{eq:conetangent} and~\eqref{eq:unfoldjoin},
these form the system $\mathcal J(\xx)=0$, from which~\eqref{eq:reconstruction} follows.
Indeed, the horizontal joining condition gives the following.
\[
 \begin{gathered}
 (b,t,0)=h(\alpha+u)\eH+\kappa\vH,\\
 h^2(\alpha+u)^2=b^2+t^2-\kappa^2=\rho^2.
 \end{gathered}
\]
Thus, we obtain $h(\alpha+u)=\rho$ on the positive branch. Solving for $(e_{H,x},e_{H,y})$
recovers the frame, and the vertical join gives $z_0$.
The box~\eqref{eq:box} fixes the branch of $\phi$.
Locally $\mathcal J=0$ determines $(\phi,z_0,h,\alpha,u)$ from $(a,b,c)$.
We can now derive three length-stationarity equations on this family that we can then solve for the three remaining free parameters.

\subsection{First variation}

We vary the parameters
\[
  \xx=(a,b,c,\phi,z_0,h,\alpha,u)
\]
within the box~\eqref{eq:box}, subject to the five joining equations
$\mathcal J(\xx)=0$. Near the selected solution, we treat $a,b,c$ as free parameters and choose $\phi,z_0,h,\alpha,u$ so that these equations remain satisfied.

Let $\xx(\varepsilon)$ be a smooth parameter path
that satisfies these conditions. For any scalar or
vector-valued quantity $F$, we define
\[
  \delta F
  :=\left.\frac{d}{d\varepsilon}
  F(\xx(\varepsilon))\right|_{\varepsilon=0}.
\]
Since the remaining parameters are locally determined
by $(a,b,c)$, the values $\delta a,\delta b,\delta c$
determine the infinitesimal variation.

These variations preserve the prescribed tangent joins.
At each smooth join, the endpoint contributions from
adjacent segments to the first variation of length
cancel because the segments share the same unit tangent.
Let $\Tv_A=(X-A)/\norm{X-A}$ and
$\Tv_c=(k_c,\cos\beta,\sin\beta)/\sqrt{1+k_c^2}$.
The straight development gives
\begin{gather*}
 \bar Y-\bar A=\norm{\bar Y-\bar A}\bar\Tv,\\
 j_F=\bar A_x\bar T_y-\bar A_y\bar T_x
     =r_0\bar T_y=r_1\ip{\Tv_-}{\av_\Theta}.
\end{gather*}
The determinant $j_F$ is constant along the developed straight line. 
Since $\bar\Tv$ is a unit vector, $\left|j_F\right|$ is its distance from the
developed peak. To differentiate the length, use
\begin{align*}
 \delta\norm{\bar Y-\bar A}&=\ip{\bar\Tv}{\delta\bar Y-\delta\bar A},\\
 \delta\bar Y&=(\cos(\mu\Theta),\sin(\mu\Theta))\,\delta r_1\\
       &\quad+r_1(-\sin(\mu\Theta),\cos(\mu\Theta))\,\delta(\mu\Theta).
\end{align*}
Using $\delta(\mu\Theta)=\Theta\delta\mu+\mu\delta\Theta$, we get
\begin{equation}\label{eq:developedvariation}
 \begin{aligned}
 \delta\norm{\bar Y-\bar A}
 &={}\ip{\Tv_-}{\gv(\Theta)}\delta r_1
  +j_F(\Theta\delta\mu+\mu\delta\Theta)\\
 &\quad-\bar T_x\delta\bar A_x-\bar T_y\delta\bar A_y.
 \end{aligned}
\end{equation}
To express it in the moving three-dimensional frame, we define
\begin{equation}\label{eq:cone_force}
 \mathbf Q=j_F\mu\left(\mu\Theta \eF-
 \nu\left(\sin\Theta\,\vF+(1-\cos\Theta)\wF\right)\right).
\end{equation}
An orthonormal frame has infinitesimal variations of the form
\[
 \delta\eF=\omega_1\vF+\omega_2\wF,\quad
 \delta\vF=-\omega_1\eF+\omega_3\wF,\quad
 \delta\wF=-\omega_2\eF-\omega_3\vF.
\]
Differentiate
$A-D=\bar A_x\gv_0+\bar A_y\wF$,
$Y-D=r_1\gv(\Theta)$ and $C-D=\eF/\mu$.
Using $\mu\delta\mu+\nu\delta\nu=0$ and
$\Tv_A=\bar T_x\gv_0+\bar T_y\wF$ gives the three scalar products
\begin{align*}
 \ip{\Tv_-}{\delta(Y-D)}
 ={}&\ip{\Tv_-}{\gv(\Theta)}\delta r_1+j_F\mu\delta\Theta\\
 &+j_F\{\nu(-\sin\Theta\,\omega_1+\cos\Theta\,\omega_2)
                    +\mu\omega_3\},\\
 \ip{\Tv_A}{\delta(A-D)}
 ={}&\bar T_x\delta\bar A_x+\bar T_y\delta\bar A_y
                    +j_F(\nu\omega_2+\mu\omega_3),\\
 \ip{\mathbf Q}{\delta(C-D)}
 ={}&-j_F\Theta\delta\mu
       -j_F\nu\{\sin\Theta\,\omega_1+(1-\cos\Theta)\omega_2\}.
\end{align*}
Subtracting the last two from the first cancels every $\omega_i$ term
and leaves~\eqref{eq:developedvariation}. Thus
\[
 \delta\norm{\bar Y-\bar A}
 =\ip{\Tv_-}{\delta(Y-D)}-\ip{\Tv_A}{\delta(A-D)}
  -\ip{\mathbf Q}{\delta(C-D)}.
\]
Adding the straight lead $[Y,E]$,
whose variation is $\ip{\Tv_-}{\delta E-\delta Y}$, gives
\begin{equation}\label{eq:variation}
 \begin{aligned}
 \delta\bigl(\norm{\bar Y-\bar A}+\lambda\bigr)
 &={}\ip{\Tv_-}{\delta E}-\ip{\Tv_A}{\delta A}\\
 &\quad-\ip{\Tv_--\Tv_A-\mathbf Q}{\delta D}-\ip{\mathbf Q}{\delta C}.
 \end{aligned}
\end{equation}
Here $-\ip{\mathbf Q}{\delta C}$ accounts for moving the center of the
unit sphere that determines the cone.

For the outer helix, recall $E=H(-\alpha)$, $G=H(\alpha)$ and
$\Tv_\pm=H'(\pm\alpha)/q_H$. Differentiating~\eqref{eq:outer}, with
$\partial_\phi\eH=\vH$ and $\partial_\phi\vH=-\eH$, shows that the
$\delta\phi$ and $\delta z_0$ terms in the two endpoint scalar products
agree. The remaining terms are
\[
 \ip{\Tv_-}{\delta E}-\ip{\Tv_+}{\delta G}
 =-2q_H\,\delta\alpha-\frac{2\alpha h}{q_H}\,\delta h,
 \qquad \delta q_H=\frac{h}{q_H}\,\delta h.
\]
They cancel the variation of $2\alpha q_H$, giving
\begin{equation}\label{eq:cancellation}
 \ip{\Tv_-}{\delta H(-\alpha)}-\ip{\Tv_+}{\delta H(\alpha)}
       +\delta(2\alpha q_H)=0.
\end{equation}
The final straight piece contributes
$\ip{\Tv_+}{\delta B}-\ip{\Tv_+}{\delta G}$. Combining it with
\eqref{eq:variation} and~\eqref{eq:cancellation} gives the variation of
one arm:
\[
 \delta L(\mathcal A)
 =\ip{\Tv_+}{\delta B}-\ip{\Tv_A}{\delta A}
  -\ip{\Tv_--\Tv_A-\mathbf Q}{\delta D}-\ip{\mathbf Q}{\delta C}.
\]
For the central portion, differentiate the tangent decomposition
in~\eqref{eq:central-trig}:
\[
 (t,c)=(\sin\beta,\cos\beta)+p(\cos\beta,-\sin\beta).
\]
It gives
\[
 (\delta t,\delta c)
 =(\cos\beta,-\sin\beta)\,\delta(\beta+p)
 -p(\sin\beta,\cos\beta)\,\delta\beta.
\]
Taking the scalar product with $(\cos\beta,-\sin\beta)$ eliminates
the last term, so
\[
 \delta(\beta+p)
 =\cos\beta\,\delta t-\sin\beta\,\delta c.
\]
Since $k_c=b/(\beta+p)$, differentiating the central length gives
\begin{align*}
 \tfrac12\delta L(\mathcal C)
 &=\frac{b\,\delta b+(\beta+p)\,\delta(\beta+p)}
         {\sqrt{b^2+(\beta+p)^2}}\\
 &=\frac{k_c\,\delta b+\cos\beta\,\delta t-\sin\beta\,\delta c}
         {\sqrt{1+k_c^2}}\\
 &=T_{c,x}\delta b+T_{c,y}\delta t-T_{c,z}\delta c.
\end{align*}
Finally, $L=2L(\mathcal A)+L(\mathcal C)$ and the endpoint coordinates give
\begin{align*}
 \delta A&=(\delta a,0,0),& \delta D&=(-\delta a,0,0),\\
 \delta B&=(-\delta b,-\delta t,\delta c),&
 \delta C&=(\delta b,\delta t,\delta c).
\end{align*}
Substituting these in the arm variation yields
\begin{align*}
 \tfrac12\delta L={}&(T_{-,x}-2T_{A,x}-Q_x)\delta a
 +(T_{c,x}-T_{+,x}-Q_x)\delta b\\
 &+(T_{c,y}-T_{+,y}-Q_y)\delta t
 +(T_{+,z}-T_{c,z}-Q_z)\delta c.
\end{align*}
To eliminate $\delta t$, recall $t=(a-b)/\sigma$ from~\eqref{eq:endpoints} and
$\sigma^2=4a^2-1$ from~\eqref{eq:endpoints}. Hence
\[
 t_a=\frac1\sigma-\frac{4a(a-b)}{\sigma^3}
     =\frac{4ab-1}{\sigma^3},\qquad
 t_b=-\frac1\sigma,\qquad \delta t=t_a\delta a+t_b\delta b.
\]
The three remaining equations are
\begin{equation}\label{eq:stationary}
 \boxed{\begin{aligned}
 f_1&=T_{-,x}-2T_{A,x}-Q_x
       +\frac{4ab-1}{\sigma^3}(T_{c,y}-T_{+,y}-Q_y)=0,\\
 f_2&=T_{c,x}-T_{+,x}-Q_x
       -\frac{T_{c,y}-T_{+,y}-Q_y}{\sigma}=0,\\
 f_3&=T_{+,z}-T_{c,z}-Q_z=0.
 \end{aligned}}
\end{equation}
Let $\mathbf y(a,b,c)=(\phi,z_0,h,\alpha,u)$ be the
local solution of the five joining equations. The \emph{reduced length}
is the full curve length expressed in the three independent parameters:
\[
 L_{\rm red}(a,b,c)=L(a,b,c,\mathbf y(a,b,c)).
\]
The calculation above says
\begin{align*}
 \delta L_{\rm red}&=2(f_1\delta a+f_2\delta b+f_3\delta c),\\
 \nabla L_{\rm red}&=2\mathbf f(a,b,c,\mathbf y(a,b,c)).
\end{align*}
Here $\mathbf f=(f_1,f_2,f_3)$, and the factor $2$ undoes the
normalization $\tfrac12\delta L$. Thus $\mathbf f=0$ expresses
stationarity under every variation within this joining family.

\subsection{An exact root and its certificate}
Let $\xx_*$ be the unique solution of $\mathcal J=0$, $\mathbf f=0$ in the rational box 

\begin{equation}\label{eq:box}
\begin{gathered}
\mathcal{B}\coloneqq \{ \xx:\norm{\xx - \hat\xx}_\infty\leq 10^{-10} \},\qquad  \hat \xx \coloneqq \left( a,b,c,\phi,z_0,h,\alpha,u \right)\\
 \begin{array}{c|r@{\qquad}c|r}
 a&0.7124978889260254&\phi&0.5323434101774462\\
 b&0.1273197867039577&z_0&0.4759161674352577\\
 c&0.8231309666744696&h&0.5672477790020362\\
 \alpha&0.2307732127386915&u&0.4783812610165119
 \end{array}.
 \end{gathered}
\end{equation}

Let
$\mathcal E=(\mathcal J,\mathbf f)$, the combined joining and stationarity equalities,
using $j_F=r_0\ip{\Tv_A}{\wF}$ to define $\mathbf f$ throughout $\mathcal B$.
Rational interval bounds verify every
denominator and angle branch. Interval arithmetic proves, for a rational
approximate inverse $\mathsf M$,
\[
 \sup_{\xx\in\mathcal B}
 \norm{I-\mathsf M\mathcal E'(\xx)}_\infty\le10^{-3},
 \qquad
 \norm{\mathsf M\mathcal E(\widehat{\xx})}_\infty\le10^{-15}.
\]
Let $\mathcal T(\xx)=\xx-\mathsf M\mathcal E(\xx)$.
This is a Newton step with the approximate inverse held fixed.
Its derivative is $I-\mathsf M\mathcal E'(\xx)$, so the first bound
makes the map $\mathcal T$ a contraction with constant $10^{-3}$ on the convex
box $\mathcal B$. It also makes $\mathsf M\mathcal E'(\xx)$ invertible
(its distance from $I$ is less than one), hence $\mathsf M$ is nonsingular.
For $\xx\in\mathcal B$,
\begin{align*}
 \norm{\mathcal T(\xx)-\widehat{\xx}}_\infty
 &\le\norm{\mathcal T(\xx)-\mathcal T(\widehat{\xx})}_\infty
       +\norm{\mathsf M\mathcal E(\widehat{\xx})}_\infty\\
 &\le10^{-3}\norm{\xx-\widehat{\xx}}_\infty+10^{-15}
 \le10^{-3}10^{-10}+10^{-15}<10^{-10}.
\end{align*}
Thus $\mathcal T$ maps $\mathcal B$ into itself. Its fixed points are
exactly the zeros of $\mathcal E$, since $\mathsf M$ is nonsingular.
From Banach's fixed-point theorem it follows that there exists a unique zero $\xx_* \in \mathcal B$. At this fixed
point the estimate yields
$\norm{\xx_*-\widehat{\xx}}_\infty
 \le10^{-3}\norm{\xx_*-\widehat{\xx}}_\infty+10^{-15}$, hence
\[
 \norm{\xx_*-\widehat{\xx}}_\infty
 \le\frac{10^{-15}}{1-10^{-3}}<10^{-12}.
\]
The root is therefore in the interior of $\mathcal B$. Through interval evaluation in
\path{OpenCurve/LengthBounds.lean} we get the bound
\begin{equation}\label{eq:leanlength}
 3.90512404<L(\Gamma)<3.90512429.
\end{equation}
For~\eqref{eq:result}, \path{supplement/validate_symbolic.py} isolates a root
within $4.721\,10^{-66}$ of its recorded rational center.
The program \path{supplement/check_paper.py} checks that this enclosure
lies in $\mathcal B$, so uniqueness identifies the same $\xx_*$, and
evaluates the sum-of-pieces form of~\eqref{eq:length} on this finer
enclosure to obtain the sharper bound.

\subsection{Strict local minimality in the joining family}
We partition $\xx=(\xx_{\rm f},\xx_{\rm d})$ into
free parameters $\xx_{\rm f}=(a,b,c)$ and dependent parameters
$\xx_{\rm d}=(\phi,z_0,h,\alpha,u)$. Write $D_{\rm f},D_{\rm d}$ for
the corresponding partial derivatives.
An interval bound on a rational approximate inverse, checked in
\path{OpenCurve/RootGraph/ReducedDerivative.lean}, verifies that $D_{\rm d}\mathcal J$ is invertible throughout $\mathcal B$.
All required expressions are smooth on a neighborhood of the root.
From the implicit function theorem it therefore follows that there is a local $C^2$ parametrization
$\xx_{\rm d}=\mathbf y(\xx_{\rm f})$ that we use above. Differentiating
$\mathcal J(\xx_{\rm f},\mathbf y(\xx_{\rm f}))=0$, we get
\[
 D_{\rm f}\mathcal J+D_{\rm d}\mathcal J\,D\mathbf y=0,
 \qquad D\mathbf y=-(D_{\rm d}\mathcal J)^{-1}D_{\rm f}\mathcal J.
\]
Differentiating $\nabla L_{\rm red}=2\mathbf f$ also accounts for the
adjustment of the dependent parameters needed to preserve the joins:
\begin{align*}
 \mathsf H=D^2L_{\rm red}
 &=2\left(D_{\rm f}\mathbf f+D_{\rm d}\mathbf f\,D\mathbf y\right)\\
 &=2\left(D_{\rm f}\mathbf f-
 D_{\rm d}\mathbf f\,(D_{\rm d}\mathcal J)^{-1}D_{\rm f}\mathcal J\right).
\end{align*}
Throughout the part of the joining family in $\mathcal B$, the same
file checks interval bounds showing that each Hessian entry differs by at most one from the
corresponding entry of the integer matrix
\begin{equation}\label{eq:hessian}
 \mathsf H_*=
 \begin{pmatrix}23&21&-15\\21&29&-22\\-15&-22&32\end{pmatrix},
 \qquad |H_{ij}-(H_*)_{ij}|\le1.
\end{equation}
The symmetric matrix $2\mathsf H_*-7I$ has leading principal minors
$39$, $225$ and $2301$, all positive. From Sylvester's criterion, we therefore get
for every $\mathbf v=(v_1,v_2,v_3)\in\R^3$,
\[
 \mathbf v^{\mathsf T}\mathsf H_*\mathbf v
 \ge\tfrac72\norm{\mathbf v}^2.
\]
To transfer this bound to $\mathsf H$, we use the entrywise error:
\[
 \left|\mathbf v^{\mathsf T}(\mathsf H-\mathsf H_*)\mathbf v\right|
 \le\sum_{i,j}|v_i||v_j|
 =\left(\sum_i|v_i|\right)^2\le3\norm{\mathbf v}^2.
\]
The last step is the Cauchy-Schwarz inequality. Hence
\[
 \mathbf v^{\mathsf T}\mathsf H\mathbf v
 \ge\tfrac12\norm{\mathbf v}^2.
\]
For a sufficiently small free-parameter displacement $\mathbf v$, the
parametrized family stays inside $\mathcal B$ along a segment from
$\xx_{{\rm f},*}$ to $\xx_{{\rm f},*}+\mathbf v$. Since
$\nabla L_{\rm red}(\xx_{{\rm f},*})=2\mathbf f(\xx_*)=0$, the Taylor
formula with integral remainder gives
\begin{align*}
 &L_{\rm red}(\xx_{{\rm f},*}+\mathbf v)-L_{\rm red}(\xx_{{\rm f},*})\\
 &\qquad=\int_0^1(1-s)\,
   \mathbf v^{\mathsf T}\mathsf H(\xx_{{\rm f},*}+s\mathbf v)\mathbf v\,ds\\
 &\qquad\ge\tfrac14\norm{\mathbf v}^2.
\end{align*}
This proves the strict local minimality of reduced length at $\xx_*$.
Section~\ref{sec:width} proves $w=1$ throughout the
nearby joining family, which therefore shows strict local optimality of the ratio $L/w$ within that curve family on $\mathcal B$.

\section{Width in every direction}\label{sec:width}
The proofs in this section frequently defer to computer-assisted bounds and dense computations, that can be verified individually through substitutions of the previously defined quantities and application of elementary algebraic identities but are not of particular interest to expand individually. They can be verified collectively through the supplementary Lean~4 proof. Instead we want to preface them with an intuitive overview of the proof idea and structure.

To prove unit width, we need to identify at least one direction $\mv$ for which the directional width, which is always an upper bound of the overall width, is exactly $1$, and show that the directional width is no lower than $1$ for every remaining direction. For every directional width there must be at least two support points, referred to as witnesses in the supplementary programs, one point per plane of the corresponding slab. For the curve to be optimal, we expect every point on a non-straight piece to be a support point for a direction whose directional width is $1$, otherwise the curve could be shortened by replacing part of the non-straight piece with a line segment, which would be shorter without losing any necessary support. Note, how Ghomi observes that this is not the case for his proposed closed curve in~\cite[Note~5.2]{Ghomi} and infers it is not optimal and can be shortened near the corners without losing width. Similar holds for end and corner points. We will therefore have to give analytical arguments for subsets of all directions corresponding to the non-straight pieces and end points for which directional width must be critical, in Sections~\ref{sec:slab} and \ref{sec:analytic-support}.

If we can establish directional width of at least $1$ for all critical directions and some open neighborhood of these direction subsets on the unit sphere, see the colored regions on Figure~\ref{fig:width-proof}, we need to establish the bound for the remaining compact complement on the unit sphere on which directional width is not critical. For any non-critical direction $\nv$ directional width of that direction's support points must have a lower bound greater than $1$ on some open neighborhood of $\nv$ on the sphere because directional width is continuous. This implies that an open cover of that compact remaining subset of directions exists, which must have a finite subcover, where each open direction set corresponds to two fixed curve points whose projected separation is strictly greater than $1$ on it. This can then be verified using computer assistance with sufficient numerical precision and interval arithmetic. An analogous argument, using a spherical triangulation rather than a cover of open cones, is made in Section~\ref{sec:finite-cover}. In that version of the argument a triangle vertex can also have critical width if that is analytically established.

Whenever we credit a bound to interval arithmetic, we mean outward-rounded interval evaluation using the parameter enclosures and procedures specified in the corresponding supplementary checker. The supplementary material~\cite{bartels_2026_23309323} contains programs for the replication of all cited bounds.
Its \path{INTERVAL_INDEX.txt} locates the verification of each interval bound in the manuscript.

For a compact set $\mathcal S$, write $h_{\mathcal S}(\nv)=\max_{P\in\mathcal S}\ip \nv P$.
Let $\mathcal K=\conv\Gamma$ and
$W(\nv)=h_{\mathcal K}(\nv)+h_{\mathcal K}(-\nv)=h_{\mathcal K-\mathcal K}(\nv)$.
Thus $W(\nv)/\norm\nv$ is the distance between the supporting parallel planes
normal to $\nv\ne0$ or the directional width along direction $\nv$. We prove $W(\nv)\ge\norm \nv$ in every direction,
with equality in one.
Every pair $P,Q\in\mathcal K$ gives the lower bound
$W(\nv)\ge\ip\nv{P-Q}$. The half-turn symmetry $R\Gamma=\Gamma$ gives
$W(R\nv)=W(\nv)$. Also
$W(-\nv)=W(\nv)$ and $W(s\nv)=sW(\nv)$ for $s\ge0$, so a bound for unit
normals extends to nonzero normals by multiplying by their length.
All bounds below hold for every $\xx\in\mathcal B$ satisfying
the five joining equations $\mathcal J=0$.
Figure~\ref{fig:width-proof} shows the analytic regions and the
subdivision used for the remaining unit normals.

\begin{figure}[H]
\centering
\includegraphics[width=\linewidth]{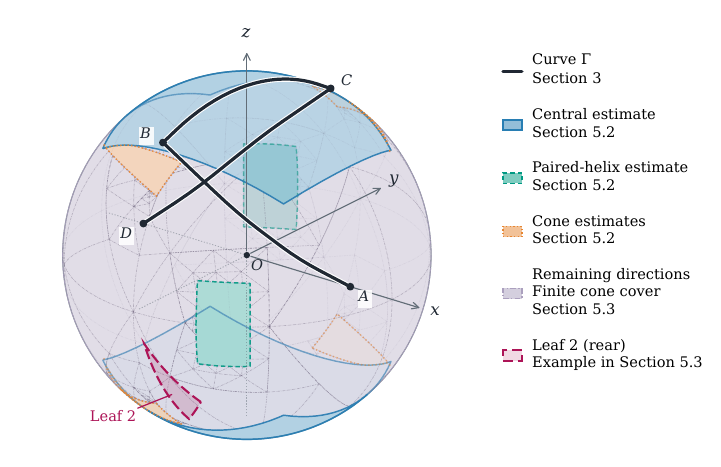}
\caption{Directions used in the width proof. The sphere represents the set of directions and our curve is drawn in the same coordinate system.
Colored patches with distinct border styles show the analytic
regions of Section~\ref{sec:analytic-support}, including their symmetry
images. Overlapping regions are drawn only once for readability. The labeled leaf 2 (dashed outline because it is on the backside of the sphere) is the spherical triangle used as the example in Section~\ref{sec:finite-cover}}
\label{fig:width-proof}
\end{figure}

\subsection{A unit containing slab}\label{sec:slab}
Recall $\sigma^2=4a^2-1$ and $\sigma t=a-b$ from~\eqref{eq:endpoints}. The vector
$\mv=(1,\sigma,0)/(2a)$ has
\[
 \norm\mv^2=\frac{1+\sigma^2}{4a^2}=1,\qquad
 \ip\mv C=\frac{b+\sigma t}{2a}=\frac12.
\]
The other endpoint projections follow in the same way:
\[
 \ip \mv A=\ip \mv C=\tfrac12,\qquad
 \ip \mv B=\ip \mv D=-\tfrac12.
\]
We check containment on whole arcs. On the cone arc the
uniform bounds, due to interval arithmetic,
\[
 0\le r\le\tfrac75,\qquad
 \tfrac9{10}\le\cos\theta\le1,\qquad
 -\tfrac13\le\sin\theta\le0
\]
give $0\le r\ip\mv{\gv(\theta)}\le1$ ,
using
\[
 \ip\mv{\gv(\theta)}
 =\nu\ip\mv{\eF}+\mu\cos\theta\ip\mv{\vF}
                      +\mu\sin\theta\ip\mv{\wF}.
\]
Since $\ip\mv D=-1/2$, the cone arc lies in the slab.
For the helices, use $\alpha\le1/4$, $\beta\le13/25$ and $m_z=0$ in
\[
 \ip\mv{H(\theta)}
 =-h\theta\ip\mv\eH-\tfrac12\cos\theta\ip\mv\vH,
 \qquad
 \ip\mv{H_c(\theta)}=m_xk_c\theta+m_y\sin\theta.
\]
Interval bounds $\left|\theta\right|\le1/4$, $\left|\cos\theta\right|\le1$ for the first
expression, and $\left|\theta\right|,\left|\sin\theta\right|\le13/25$ for the second, place
both projections in $[-1/2,1/2]$. Convexity covers every connecting
segment and the half-turn reverses the projection and covers the other arm.
Thus we get the directional width $W(\mv)=1$, which implies that the overall width, which is the minimum over all directions, can be no greater than one.

\begin{remark}
The curve touches this slab four alternating times, as in the argument by Wienholtz cited in~\cite{Ghomi} and its projection onto the plane whose normal is $\mv$ is the shortest for which we found these four alternating touches. The curves length in that projection is roughly $2.476$, significantly longer than the caliper curve's length of roughly $2.278$ that Ghomi uses in his lower bound on the length of a three-dimensional unit width curve in his Lemma 2.2 and Corollary 3.2. Replacing the caliper curve's length with the length of our projection yields the lower bound for our curve length $3.8896$, below its actual length of roughly $3.905$. Investigating these gaps may be of interest for further work towards proving the minimal length of a unit width curve.
\end{remark}

\subsection{Three analytic support estimates}\label{sec:analytic-support}
The following regions are homogeneous cones and all arguments follow for their negatives due to symmetry of directional width. Together with the reflected cone region they supply
four regions in the finite covering below.

\paragraph{Central region.}
Suppose
\begin{equation}\label{eq:centralregion}
 n_z\ge|n_x|,\qquad |n_y|\le\tan(0.65)n_z.
\end{equation}
Write $(n_y,n_z)=r(\sin\theta,\cos\theta)$, so $|\theta|\le0.65$.
The $yz$ projection of $H_c$ is a unit-circle arc. For $|\theta|\le\beta$,
$H_c(\theta)$ has scalar product $r$ with $(0,n_y,n_z)$ and
$|x|\le b$: explicitly,
$n_y\sin\theta+n_z\cos\theta=r$ and
$|k_c\theta|\le k_c\beta<b$, since $b=k_c(\beta+p)$ and $p>0$.
For $\beta\le\theta\le0.65$ use $C$. The first identity below follows
from~\eqref{eq:central-trig} and the inequality follows from interval arithmetic:
\[
 t\sin\beta+c\cos\beta=1,\qquad
 t\sin(0.65)+c\cos(0.65)\ge\tfrac{251}{250}>1.
\]
The positive function $t\sin\theta+c\cos\theta$ is concave on this
interval, so is at least one there. Negative $\theta$ uses $B$.
In both cases we have a point $P\in\mathcal K$ with
$n_yP_y+n_zP_z\ge r$ and $|P_x|\le b$. Choose the endpoint
$Q\in\{A,D\}$ with $n_xQ_x=-a|n_x|$. Then
\[
 W(\nv)\ge\ip\nv{P-Q}
 \ge r-b|n_x|+a|n_x|=r+(a-b)|n_x|.
\]
The region gives $r\ge n_z\ge|n_x|$, so the triangle
inequality in $\R^2$ yields
\[
 \norm\nv
 =\norm{(r-|n_x|)(1,0)+|n_x|(1,1)}
 \le r+(\sqrt2-1)|n_x|
 \le r+(a-b)|n_x|
 \le W(\nv),
\]
using $a-b>\sqrt2-1$ from~\eqref{eq:box}.

\paragraph{Paired-helix region.}
Let $q=-\ip \nv \vH$ and require
\begin{equation}\label{eq:pairregion}
 q\ge0,\qquad |\ip \nv \eH|\le0.15q,\qquad
 |n_z|\le\tan(0.25)q.
\end{equation}
For $\nv\ne0$, the region forces $q>0$. Put $x=\ip \nv \eH$,
$r=\sqrt{q^2+n_z^2}$, $\theta=\atan(n_z,q)$ and
$\delta=\arctan(x/r)$, the tilt of $\nv$ from the plane perpendicular
to the helix axis. Then $|\theta|\le0.25$ and
$|\delta|\le|x|/r\le0.15$. Interval arithmetic gives
\begin{equation}\label{eq:pairavailability}
 h\ge\tfrac12,\qquad
 0.4<\alpha+\min(\lambda/q_H,u).
\end{equation}
Evaluate $H$ at $\theta-\delta$ and its half-turn
image $H^*=R\circ H$ at $-\theta-\delta$.
The definitions $x=\ip\nv\eH$ and $q=-\ip\nv\vH$ give, for an angular
parameter $\vartheta$,
\begin{align*}
 \ip\nv{H(\vartheta)}&=z_0n_z-hx\vartheta+\tfrac r2\cos(\vartheta-\theta),\\
 \ip\nv{H^*(\vartheta)}&=z_0n_z+hx\vartheta-\tfrac r2\cos(\vartheta+\theta).
\end{align*}
The proposed difference trades separation perpendicular to the helix axis for a gain along
the helix axis:
\[
 \ip\nv{H(\theta-\delta)-H^*(-\theta-\delta)}
 =r\cos\delta+2hx\delta.
\]
If a parameter leaves the
helix interval, use its tangent-line continuation on the adjacent
segment. Both expressions have absolute value at most $0.4$,
so this point is permitted by~\eqref{eq:pairavailability}. The adjacent
segments extend the angular parameter by $\lambda/q_H$ and $u$.
For $\ip\nv H$ and $-\ip\nv{H^*}$, the circular terms are
concave between the actual and virtual parameters:
their cosine arguments have absolute value at most $0.4+0.25<\pi/2$.
A concave function lies below its tangent
line, so replacement by these segment points can only increase
the projected difference.
Since $\delta$ has the sign of $x$ and $|\sin\delta|\le|\delta|$,
we have $x\delta\ge x\sin\delta\ge0$. Thus $2h\ge1$ gives
\[
 W(\nv)\ge r\cos\delta+2hx\delta
 \ge r\cos\delta+x\sin\delta
 =\sqrt{r^2+x^2}=\norm\nv.
\]

\paragraph{Cone region.}
The developed endpoints obey $r_0>1$ and $r_1\cos(\mu\Theta)>1$,
as checked using interval arithmetic.
Thus the whole developed segment has first coordinate, and hence radius,
greater than one. Its continuous polar angle takes every value between
$0$ and $\mu\Theta$, so every $\theta\in[\Theta,0]$ occurs at a curve
point $P_F=D+r\gv(\theta)$ with $r>1$. Moving inward along its generator
stays in the convex hull:
\[
 D+\gv(\theta)=(1-r^{-1})D+r^{-1}P_F\in\mathcal K.
\]
Since $D,C\in\mathcal K$, the difference body contains $\overrightarrow{DC}$
and $C-(D+\gv(\theta))$, namely the circular arc
\begin{equation}\label{eq:section}
 S(\theta)=\overrightarrow{DC}-\gv(\theta)
 = (\lVert\overrightarrow{DC}\rVert-\nu)\eF-\mu(\cos\theta\,\vF+\sin\theta\,\wF),
 \qquad \Theta\le\theta\le0.
\end{equation}
For $Z=A,E$, define
$\lambda_Z=\nu/\ip{\eF}{Z-D}$ and
$Q_Z=\overrightarrow{DC}-\lambda_Z(Z-D)$. The interval bounds
$0<\lambda_Z<1$ give
$Q_Z=C-((1-\lambda_Z)D+\lambda_Z Z)\in\mathcal K-\mathcal K$.
The entry tangent plane contains $AD$, and the exit tangent plane
contains $DY$ and the tangent line $YE$ by~\eqref{eq:conetangent}, hence
also $DE$. The resulting identities
$\ip{\Nv(0)}{A-D}=\ip{\Nv(\Theta)}{E-D}=0$ give
\[
 Q_A=S(0)-\mu\tau_0\av_0,\qquad
 Q_E=S(\Theta)+\mu\tau_1\av_\Theta,
\]
Here $\lambda_Z(Z-D)$ has axial coordinate $\nu$, and the tangent-plane
identity forces its radial coordinate to be $\mu$. The remaining
component is along $\av_0$ or $\av_\Theta$, with
\[
 \tau_0=\frac{\lambda_A}{\mu}\ip{A-D}{\av_0},\qquad
 \tau_1=-\frac{\lambda_E}{\mu}\ip{E-D}{\av_\Theta}.
\]
Through interval arithmetic we verify $\tau_0,\tau_1>\tan(0.02)$.

For $0\le\delta\le0.04$,
\[
 \cos\delta+\tau_i\sin\delta-1
 =\sin\delta\bigl(\tau_i-\tan(\delta/2)\bigr)\ge0,
\]
because $\delta/2\le0.02$ and $\tau_i>\tan(0.02)$.
Therefore, for normals
\[
 \nv=x \eF-y(\cos\theta\,\vF+\sin\theta\,\wF),\quad y\ge0,
 \quad \Theta-0.04\le\theta\le0.04,
\]
the arc~\eqref{eq:section} and its two tangent continuations yield
\begin{equation}\label{eq:conebound}
 W(\nv)\ge\max\{\lVert\overrightarrow{DC}\rVert x,(\lVert\overrightarrow{DC}\rVert-\nu)x+\mu y\}.
\end{equation}
Here tangent continuation refers to the section points $Q_A,Q_E$.
For example, when $\theta=\delta\in[0,0.04]$,
\[
 \ip\nv{Q_A}=(\lVert\overrightarrow{DC}\rVert-\nu)x
      +\mu y(\cos\delta+\tau_0\sin\delta)
 \ge(\lVert\overrightarrow{DC}\rVert-\nu)x+\mu y.
\]
The point $Q_E$ gives the same estimate when $\theta=\Theta-\delta$.
Inside $[\Theta,0]$ use $S(\theta)$. The other term in
\eqref{eq:conebound} is $\ip\nv{\overrightarrow{DC}}=\lVert\overrightarrow{DC}\rVert x$.

For the finite cover we need linear inequalities keeping the normal in
this angular sector and ensuring $x\ge y\ge0$. Let
$\theta_-=\Theta-0.04$ and $\theta_+=0.04$ with the sector midpoint
$\Theta/2$. Let
\begin{equation}\label{eq:coneregion}
\begin{aligned}
 \ip{\sin\theta_-\,\vF-\cos\theta_-\,\wF}{\nv}&\ge0,\\
 \ip{-\sin\theta_+\,\vF+\cos\theta_+\,\wF}{\nv}&\ge0,\\
 \ip{\eF+1.1\bigl(\cos(\Theta/2)\vF+\sin(\Theta/2)\wF\bigr)}{\nv}&\ge0.
\end{aligned}
\end{equation}
For $y>0$, the first two left-hand sides are
$y\sin(\theta-\theta_-)$ and $y\sin(\theta_+-\theta)$, giving the
angular sector $\theta_-\le\theta\le\theta_+$.
The last inequality is $x-1.1y\cos(\theta-\Theta/2)\ge0$.
Since $|\theta-\Theta/2|\le0.04-\Theta/2\le1/4$,
\[
 x\ge1.1\cos(\theta-\Theta/2)\,y
 \ge\frac{11}{10}\left(1-\frac{(1/4)^2}{2}\right)y
 =\frac{341}{320}y\ge y\ge0.
\]
When $y=0$, the last inequality gives $x\ge0$.
Interval arithmetic yields $0\le d\le1$ on $\mathcal B$.

To compare~\eqref{eq:conebound} with $\norm\nv$, consider the planar points
\[
 U_F=(\lVert\overrightarrow{DC}\rVert,0),\qquad
 V_F=(\lVert\overrightarrow{DC}\rVert-\nu,\mu).
\]
Their joining line $X+dY=\lVert\overrightarrow{DC}\rVert=\sqrt{1+d^2}$ is tangent to the unit
circle because $d\mu=\nu$ from~\eqref{eq:cone_frame}. The point $U_F$ lies on the positive horizontal
axis, while $V_F$ has slope
$\mu/(\lVert\overrightarrow{DC}\rVert-\nu)=1/(1-d+d^2)\ge1$, since $0\le d\le1$.
Thus every ray through $(x,y)\ne(0,0)$ with $x\ge y\ge0$ meets
$[U_F,V_F]$ at a point $Z_F$ with $\norm{Z_F}\ge1$.
A linear projection on a segment is no larger than its largest endpoint
projection, so
\[
 \max\{\ip{(x,y)}{U_F},\ip{(x,y)}{V_F}\}
 \ge\ip{(x,y)}{Z_F}
 =\sqrt{x^2+y^2}\,\norm{Z_F}\ge\norm\nv.
\]
These endpoint projections are the two bounds in~\eqref{eq:conebound}.
At $\nv=0$ the inequality is an equality.
Half-turn symmetry supplies the reflected region.

\subsection{A finite covering of the remaining directions}\label{sec:finite-cover}
We cover the sphere of directions by spherical triangles, each certified
by one support pair or by containment in an analytic region. Write
$\operatorname{pos}(\nv_1,\nv_2,\nv_3)$ for the nonnegative span of its
vertex directions. The following lemma extends either check to the interior
(cf.\ \cite[Example~2.11]{BoydVandenberghe}).

\begin{lemma}\label{lem:cover}
If one pair $P,Q\in\mathcal K$ satisfies
$\ip{\nv_i}{P-Q}\ge\norm{\nv_i}$ for $i=1,2,3$, then
$W(\nv)\ge\norm\nv$ on $\operatorname{pos}(\nv_1,\nv_2,\nv_3)$.
If all three generators lie in an analytic region defined by homogeneous
linear inequalities, their nonnegative span lies in that region.
\end{lemma}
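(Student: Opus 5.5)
The plan is to prove the two assertions separately. Both follow from elementary convexity facts: $\nv\mapsto\ip\nv{P-Q}$ is linear, $\nv\mapsto\norm\nv$ is convex and positively homogeneous, and a cone cut out by homogeneous linear inequalities is a convex cone.

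For the first assertion, take $\nv=\sum_i s_i\nv_i$ with $s_i\ge0$. Since $P,Q\in\mathcal K$, we have $W(\nv)\ge\ip\nv{P-Q}$ for every $\nv$, as recalled above. By linearity,
\[
 \ip\nv{P-Q}=\sum_i s_i\ip{\nv_i}{P-Q}\ge\sum_i s_i\norm{\nv_i}.
\]
The triangle inequality together with $s_i\ge0$ gives $\sum_i s_i\norm{\nv_i}\ge\norm{\sum_i s_i\nv_i}=\norm\nv$. Chaining these three inequalities yields $W(\nv)\ge\norm\nv$ on all of $\operatorname{pos}(\nv_1,\nv_2,\nv_3)$, including the trivial case $\nv=0$. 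This is exactly the statement that a sublinear function dominating a linear one at the generators fails to do so nowhere on their cone, as in the cited example.

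For the second assertion, write the analytic region as $\{\nv:\ip{\mathbf c_k}{\nv}\ge0,\ k=1,\dots,K\}$. If each $\nv_i$ satisfies every inequality, then for $s_i\ge0$ we get $\ip{\mathbf c_k}{\sum_i s_i\nv_i}=\sum_i s_i\ip{\mathbf c_k}{\nv_i}\ge0$, so the nonnegative span lies in the region.

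The only point needing care, and the main obstacle, is that some regions in Section~\ref{sec:analytic-support} are written with absolute values or as two-sided bounds, for instance $n_z\ge|n_x|$, $|n_y|\le\tan(0.65)n_z$, and $|\ip\nv\eH|\le0.15q$. I would rewrite each such condition as a conjunction of homogeneous linear inequalities: $n_z\ge n_x$ and $n_z\ge-n_x$ for the first, and similarly for the others. The cone region \eqref{eq:coneregion} is already in this form. Each region is then an intersection of closed half-spaces through the origin, hence a convex cone, and the argument above applies. The analytic bound $W(\nv)\ge\norm\nv$ established on the region then holds on the whole triangle. Note that the lemma only asserts that the span lies in the region; the width bound itself comes from the earlier estimates.
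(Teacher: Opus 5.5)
Your proof is correct and follows the paper's argument essentially verbatim. You expand $\nv$ in the generators and chain $W(\nv)\ge\ip\nv{P-Q}\ge\sum_i\lambda_i\norm{\nv_i}\ge\norm\nv$ via linearity and the triangle inequality, then get the second assertion from linearity of each homogeneous inequality; your remark about splitting the absolute-value constraints into pairs of half-spaces matches how the paper stores four half-spaces per region in its checking loop.
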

\begin{proof}
For $\nv=\sum_i\lambda_i\nv_i$, $\lambda_i\ge0$,
\[
 W(\nv)\ge\ip\nv{P-Q}
 \ge\sum_i\lambda_i\norm{\nv_i}
 \ge\norm{\nv}.
\]
The second assertion follows by linearity of each defining inequality.
\end{proof}

On the unit sphere, a fixed pair $P,Q$ certifies the spherical cap
$\{\nv\in S^2:\ip\nv{P-Q}\ge1\}$. Thus the support-pair part of
our verification is a spherical coverage problem, see
Petkovi\'c, Pokrajac and Latecki~\cite{SphericalCoverage}.
Exact circle arrangements provide another approach to coverage on
$S^2$~\cite{CazalsLoriot}. Here we combine the analytic regions above
with a triangular subdivision, certifying each cone by rational
interval inequalities. The general strategy of accepting certified
regions and subdividing undecided ones is standard in interval
methods~\cite{JaulinWalter}.

The supplementary material~\cite{bartels_2026_23309323} supplies a
covering of the sphere by 512 spherical triangles. Of these, 164 are checked
for containment in the analytic regions of Section~\ref{sec:analytic-support}.
Each of the other 348 is assigned one ordered pair $(P,Q)$ satisfying
$\ip{\nv_i}{P-Q}\ge\norm{\nv_i}$ at all three generators. These 348 triangles cover the complement
of the analytic regions and cover all directions together with it. The generating code is
\texttt{generate\_cone\_cover.py} in \texttt{open\_curve\_lean\_20261008} of the supplementary material~\cite{bartels_2026_23309323}. This is provided for exposition but we note, that it is not necessary to understand the correctness of the width proof. Verifying that the checked spherical triangles cover the complement of the already covered regions and that directional width is at least $1$ on each of them is sufficient.
Coverage of the whole sphere, including triangle boundaries, is
machine-verified in \path{OpenCurve/DirectionCover/Coverage.lean}
for every parameter vector in $\mathcal B$.

Let $\mathcal P=(P_i)_{i=0}^{32}$ be the list consisting of $A,B,C,D$,
five points at parameters $j/4$
($0\le j\le4$) on each of the four outer analytic arcs, and nine points
at parameters $j/8$ ($0\le j\le8$) on the central helix, each parameter
rescaled to $[0,1]$. Thus the list has $4+4\cdot5+9=33$ entries that lie exactly on the constructed curve. In the certificate, the
arcs are ordered as $F,H,H_c,H^*,F^*$ after the four endpoints.

We provide pseudo-code for the checking loop. A region label consists of
a region $r$ and a sign $\varepsilon\in\{-1,1\}$. Its halfspaces are
$\varepsilon\ip{\mathbf a_{r,j}}\nv\ge0$. The coefficient vectors
$\mathbf a_{r,j}$ come from~\eqref{eq:centralregion},
\eqref{eq:pairregion}, \eqref{eq:coneregion} and its half-turn image.
Each region has four stored halfspaces, for the cone regions the fourth
repeats the third.
Let $\underline{[f]}$ and $\overline{[f]}$ denote rational
interval bounds valid throughout $\mathcal B$. For a ray let
\[
 b(\nv)=\sum_{i=1}^3
 \max\bigl\{|\underline{[n_i]}|,|\overline{[n_i]}|\bigr\}^2
 \ge\norm\nv^2.
\]
\begin{quote}
\small
\textbf{Input:} the 512 cones and their support-pair or analytic-region labels.
\begin{tabbing}
\quad\=\quad\=\quad\=\quad\=\kill
\textbf{for} each final cone with generators $\nv_1,\nv_2,\nv_3$:\\
\>\textbf{if} its label is a support pair $(P,Q)$:\\
\>\>\textbf{for} $k=1,2,3$:\\
\>\>\>\textbf{if} $\nv_k=\pm\mv$: check the exact endpoint identities.\\
\>\>\>\textbf{else}: set $s=\underline{[\ip{\nv_k}{P-Q}]}$;\\
\>\>\>\>require $s\ge0$ and $s^2\ge b(\nv_k)$.\\
\>\textbf{else}, with region label $(r,\varepsilon)$:\\
\>\>\textbf{for} $k=1,2,3$ and $j=1,2,3,4$:\\
\>\>\>require $\underline{[\varepsilon\ip{\mathbf a_{r,j}}{\nv_k}]}\ge0$.
\end{tabbing}
\end{quote}
At $+\mv$, the support check requires $P\in\{A,C\}$ and
$Q\in\{B,D\}$. At $-\mv$ the sets are reversed. The endpoint
projections and $\norm\mv=1$ then give equality. Thus the 348 support
cones require $348\cdot3=1044$ generator checks: 16 exact equality
checks and 1028 interval checks, all of the latter with strict margins.
The analytic cones require $164\cdot3\cdot4=1968$ signed scalar-product
checks. These are counts of loop iterations, including repeated rays
and the repeated fourth halfspace slots.

\paragraph{Example: leaf 2.}
With $\mathbf e_x=(1,0,0)$, the third leaf in the programmatic check (numbered 2 with zero-based indexing) has ray generators  
\[
 \nv_1=-\tfrac14\mathbf e_x-\tfrac34\mathbf e_z,\qquad
 \nv_2=-\tfrac12\mathbf e_x-\tfrac12\mathbf e_z,\qquad
 \nv_3=-\tfrac14\mv-\tfrac34\mathbf e_z,
\]
and
\[
 P=D=(-a,0,0),\qquad
 Q=H_c(\beta/2)
   =\bigl(k_c\beta/2,\sin(\beta/2),\cos(\beta/2)\bigr).
\]
Write $\Delta=Q-P$. The spherical triangle has unit vertices
$\mathbf u_i=\nv_i/\norm{\nv_i}$. We check that the same ordered pair
$P,Q$ has projected separation at least one in each of these directions:
\[
 \ip{\mathbf u_i}{P-Q}
 =\frac{-\ip{\nv_i}{\Delta}}{\norm{\nv_i}}\ge1,
 \qquad i=1,2,3.
\]
The minus sign comes from $P-Q=-\Delta$, and the denominator accounts
for the unnormalized generators. A bound on $\norm\Delta$ alone would
not suffice: a segment longer than one can have zero projection in a
perpendicular direction.

Rounding the coordinate enclosures outward to six decimal places and using interval arithmetic gives
the following intervals, valid on $\mathcal B$:
\[
\begin{array}{c@{\quad}c@{\qquad}c@{\quad}c}
 a &[0.712497,\,0.712498]&m_x &[0.701756,\,0.701757]\\
 Q_x &[0.053346,\,0.053347]&m_y &[0.712416,\,0.712417]\\
 Q_y &[0.253313,\,0.253314]&Q_z &[0.967384,\,0.967385]
\end{array}
\]
Subtracting $P$ from $Q$ gives the coordinate enclosures
\[
\begin{aligned}
 \relax
 [\Delta_x]&=[0.765843,\,0.765845],\\
 [\Delta_y]&=[0.253313,\,0.253314],\\
 [\Delta_z]&=[0.967384,\,0.967385].
\end{aligned}
\]
All factors in the following expressions are positive, so using their
lower endpoints gives lower bounds $s_i$ for $-\ip{\nv_i}{\Delta}$:
\begin{align*}
 s_1&=\tfrac14(0.765843)+\tfrac34(0.967384)=0.91699875,\\
 s_2&=\tfrac12(0.765843)+\tfrac12(0.967384)=0.86661350,\\
 s_3&=\tfrac14\bigl(0.701756\cdot0.765843
                    +0.712416\cdot0.253313\bigr)
       +\tfrac34(0.967384)\\
    &=0.905012788629.
\end{align*}
Thus $\ip{\nv_i}{P-Q}\ge s_i>0$. The coordinatewise squared-norm
bounds used in the loop are $b(\nv_1)=5/8$, $b(\nv_2)=1/2$ and
\[
 b(\nv_3)=\frac{0.701757^2+0.712417^2}{16}+\frac9{16}
          =0.625000054308625.
\]
Exact rational arithmetic now gives
\[
\begin{array}{c@{\qquad}c}
 i&s_i^2-b(\nv_i)\\ \hline
 1&0.2158867075015625\\
 2&0.25101895838225\\
 3&0.194048093273414031699641
\end{array}
\]
Since $s_i>0$, the three positive squared margins imply
\[
 \ip{\nv_i}{P-Q}\ge s_i>\sqrt{b(\nv_i)}\ge\norm{\nv_i}.
\]
Thus the normalized projected separations exceed $1$ at all three
vertices. To see why this certifies the entire spherical triangle,
write any unit direction in it as
\[
 \mathbf u=\frac{\mathbf v}{\norm{\mathbf v}},\qquad
 \mathbf v=\sum_{i=1}^3\lambda_i\nv_i\ne0,\quad \lambda_i\ge0.
\]
The same pair $P,Q$ then gives
\begin{align*}
 W(\mathbf u)&\ge\ip{\mathbf u}{P-Q}
 =\frac{\sum_{i=1}^3\lambda_i\ip{\nv_i}{P-Q}}{\norm{\mathbf v}}\\
 &\ge\frac{\sum_{i=1}^3\lambda_i\norm{\nv_i}}
               {\norm{\sum_{i=1}^3\lambda_i\nv_i}}\ge1.
\end{align*}
Here the last inequality is the triangle inequality. This is
Lemma~\ref{lem:cover} applied to the three vertices of the spherical triangle. The interval bounds take the whole parameter box as inputs so the result intervals are guaranteed to contain the true values for the true parameters within that box. The supplied checker uses the tighter
original rational intervals. This leaf corresponds to \texttt{check2}
in the separately supplied Lean proof, part \texttt{OpenCurve/WidthCertificate/Checks000.lean}.

The supplied \texttt{python3 check\_cone\_witnesses.py} replays the
leaf inequalities using the rational bounds. Validity of these bounds
and all 512 leaf checks is proved in Lean, with the latter collected in
\path{OpenCurve/WidthCertificate/AllChecks.lean}.
Run \texttt{python3 verify\_all.py} in
\texttt{open\_curve\_lean\_20261008} to verify both coverage and the
width certificate.
Together with the containing slab this proves $w=1$ for every curve in
this family with $\xx\in\mathcal B$ and $\mathcal J(\xx)=0$.

\section{Reproducibility and the local conclusion}\label{sec:repro}
The figures were drawn with Matplotlib~\cite{Matplotlib}.
The supplementary material contains the search, an example 32-run population,
and the numerically proven width and length certificates. The programs
\path{vertex_search/population.py} and
\path{vertex_search/analyze_population.py} reproduce the search and
recheck its results, respectively.
The file \path{INTERVAL_INDEX.txt} maps the manuscript's interval bounds to the corresponding programs and Lean statements.

The Lean companion uses mathlib~\cite{Mathlib} and
LeanCert~\cite{LeanCert}. Its combined theorem is
\texttt{certified\_construction\_and\_family\_minimum} in
\path{OpenCurve/CompletedCertificate.lean}.
\texttt{verify\_all.py} checks the proofs and audits their axioms.
Only \texttt{propext}, \texttt{Classical.choice} and \texttt{Quot.sound}
are used.

The mpmath~\cite{Mpmath} certificates, which are independent from the Lean~4 proof, include the sharper
length enclosure~\eqref{eq:result} that we give in the above results, and are run by
\path{supplement/run_all.py}. Its width check
\path{supplement/width_symbolic/verify.py} uses a normal-fan certificate,
whereas Section~\ref{sec:finite-cover} uses the Lean cone subdivision. Both methods cover all directions.
The optional \path{supplement/sample_width.py} provides a direction-sampling
plausibility check.

Theorem~\ref{thm:main} establishes local minimality within the joining
family. We conjecture global minimality among all rectifiable curves. Our belief is supported by 1) the analytical construction being shorter than any numerically optimized polygonal chain among a large number of runs, 2) the 26 competitive runs described in Section~\ref{sec:search} converging to roughly the same shape and 3) the curve being proven locally minimal within a family describing that shape.

\begin{conjecture}\label{conj:global-minimum}
The curve $\Gamma$ is a shortest rectifiable curve in $\R^3$ of unit width, i.e.
every rectifiable curve $\gamma:[0,1]\to\R^3$ with $w(\gamma)=1$ satisfies
\[
 L(\gamma)\ge L(\Gamma).
\]
\end{conjecture}

The procedure in Section~\ref{sec:search} was also applied to closed
curves, a problem studied by Zalgaller~\cite{Zalgaller} and
Ghomi~\cite{Ghomi}. Our best certified polygon has $44{,}032$ vertices
and, after normalization to unit width, length at most $5.094393486530$.
A subsequent numerical fit improves the certified bound to $5.094393485661$. This is below the best ratio, roughly $5.1086$, that was found from replacing parts of the arcs on Ghomi's closed curve with straight line segments, a route for improvement that he already noted in his original paper. This suggests, that the optimal closed curve has a qualitatively different shape than Ghomi's curve with some arc pieces replaced by straight lines.
At the time of writing, we have no conclusive analytic hypothesis for
the minimizing closed curve. The supplementary material includes the
ordered polygon vertices and the certificates for both bounds.

\backmatter

\section*{Statements and Declarations}
\subsection*{Funding}
This work was supported by the 1+1+1 Joint Collaboration Fund of The Chinese University of Hong Kong, The Chinese University of Hong Kong, Shenzhen, and the Department of Science and Technology of Guangdong Province to Tongxin Li, Tinko Sebastian Bartels, and Ruixiang Wu under Grant No. 2025A0505000047.

\subsection*{Author contributions}
Tinko Sebastian Bartels: Implemented the refining optimization across polygonal chains, the optimization across the analytical curve family, the width plausibility check, the computer-assisted width, length and stationarity computations and proofs, and drafted the manuscript. 
Dennis Kevin Choy: Implemented the original refinement of the closed curve and the original width-testing program that identified a direction in which $L_3$ has width less than one, revised Section~\ref{sec:parameters}, and revised the figures.
Ruixiang Wu: Reviewed and revised miscellaneous parts of the manuscript and assisted with the implementation of the optimization across polygonal chains.
Tongxin Li: Reviewed and revised miscellaneous parts of the manuscript.
All authors approved the final manuscript and agree to be accountable
for all aspects of the work.

\subsection*{Competing interests}
The authors declare no competing interests.

\subsection*{Use of generative AI}
OpenAI's Codex with model ChatGPT 6 Astra assisted with coding for numerical experiments, optimization code, width verification code, figure generation code, as well as manuscript editing and checking of proofs, and the generation of the separately supplied Lean~4 proof formalization based on an earlier version of the arguments. The verification procedures and the scope of their guarantees are described in
Section~\ref{sec:repro}.

\subsection*{Data availability}
The source code, recorded parameter values, and verification certificates
are included in the companion materials described in Section~\ref{sec:repro} at~\cite{bartels_2026_23309323}.

\bibliography{references}

@article{AdhikariPitman,
  author = {Adhikari, Ani and Pitman, Jim},
  title = {The shortest planar arc of width 1},
  journal = {The American Mathematical Monthly},
  volume = {96},
  number = {4},
  pages = {309--327},
  year = {1989},
  doi = {10.1080/00029890.1989.11972188},
  url = {https://doi.org/10.1080/00029890.1989.11972188}
}

@article{Qhull,
  author = {Barber, C. Bradford and Dobkin, David P. and Huhdanpaa, Hannu},
  title = {The {Quickhull} algorithm for convex hulls},
  journal = {ACM Transactions on Mathematical Software},
  volume = {22},
  number = {4},
  pages = {469--483},
  year = {1996},
  doi = {10.1145/235815.235821},
  url = {https://doi.org/10.1145/235815.235821}
}

@article{Filters,
  author = {Bartels, Tinko and Fisikopoulos, Vissarion and Weiser, Martin},
  title = {Fast floating-point filters for robust predicates},
  journal = {BIT Numerical Mathematics},
  volume = {63},
  number = {2},
  pages = {31},
  year = {2023},
  doi = {10.1007/s10543-023-00975-x},
  url = {https://doi.org/10.1007/s10543-023-00975-x}
}

@inproceedings{Lean4,
  author = {de Moura, Leonardo and Ullrich, Sebastian},
  title = {The {Lean 4} theorem prover and programming language},
  editor = {Platzer, Andr{\'e} and Sutcliffe, Geoff},
  booktitle = {Automated Deduction---{CADE} 28},
  series = {Lecture Notes in Computer Science},
  volume = {12699},
  publisher = {Springer},
  address = {Cham},
  pages = {625--635},
  year = {2021},
  doi = {10.1007/978-3-030-79876-5_37},
}

@misc{DengGeneral,
  author = {Deng, Zhipeng},
  title = {A general solution to {Bellman}'s lost-in-a-forest problem},
  year = {2024},
  note = {arXiv:2412.10686},
  eprint = {2412.10686},
  archivePrefix = {arXiv},
  primaryClass = {math.OC},
  doi = {10.48550/arxiv.2412.10686},
  url = {https://arxiv.org/abs/2412.10686}
}

@misc{DengVariations,
  author = {Deng, Zhipeng},
  title = {Proof and more variations of {Bellman}'s lost-in-a-forest problem},
  year = {2026},
  note = {arXiv:2606.13987},
  eprint = {2606.13987},
  archivePrefix = {arXiv},
  primaryClass = {math.OC},
  doi = {10.48550/arxiv.2606.13987},
  url = {https://arxiv.org/abs/2606.13987}
}

@misc{DengStrip,
  author = {Deng, Zhipeng},
  title = {Revisit escape path for infinite unit strip forest and unit broadworm},
  year = {2026},
  note = {arXiv:2607.18563},
  eprint = {2607.18563},
  archivePrefix = {arXiv},
  primaryClass = {math.OC},
  doi = {10.48550/arxiv.2607.18563},
  url = {https://arxiv.org/abs/2607.18563}
}

@misc{BoostGeometry,
  author = {Gehrels, Barend and Lalande, Bruno and Loskot, Mateusz and Wulkiewicz, Adam and Karavelas, Menelaos and Fisikopoulos, Vissarion},
  title = {{Boost.Geometry}},
  note = {Version 1.92.0, documentation},
  url = {https://www.boost.org/doc/libs/1_92_0/libs/geometry/doc/html/index.html},
  urldate = {9 October 2026}
}

@article{Ghomi,
  author = {Ghomi, Mohammad},
  title = {The length, width, and inradius of space curves},
  journal = {Geometriae Dedicata},
  volume = {196},
  number = {1},
  pages = {123--143},
  year = {2018},
  doi = {10.1007/s10711-017-0312-3},
  url = {https://doi.org/10.1007/s10711-017-0312-3}
}

@misc{Clarabel,
  author = {Goulart, Paul J. and Chen, Yuwen},
  title = {{Clarabel}: An interior-point solver for conic programs with quadratic objectives},
  year = {2024},
  note = {arXiv:2405.12762},
  eprint = {2405.12762},
  archivePrefix = {arXiv},
  primaryClass = {math.OC},
  doi = {10.48550/arxiv.2405.12762},
  url = {https://arxiv.org/abs/2405.12762}
}

@article{NumPy,
  author = {Harris, Charles R. and Millman, K. Jarrod and {van der Walt}, St{\'e}fan J. and Gommers, Ralf and Virtanen, Pauli and Cournapeau, David and Wieser, Eric and Taylor, Julian and Berg, Sebastian and Smith, Nathaniel J. and Kern, Robert and Picus, Matti and Hoyer, Stephan and {van Kerkwijk}, Marten H. and Brett, Matthew and Haldane, Allan and {del R{\'i}o}, Jaime Fern{\'a}ndez and Wiebe, Mark and Peterson, Pearu and G{\'e}rard-Marchant, Pierre and Sheppard, Kevin and Reddy, Tyler and Weckesser, Warren and Abbasi, Hameer and Gohlke, Christoph and Oliphant, Travis E.},
  title = {Array programming with {NumPy}},
  journal = {Nature},
  volume = {585},
  number = {7825},
  pages = {357--362},
  year = {2020},
  doi = {10.1038/s41586-020-2649-2},
  url = {https://doi.org/10.1038/s41586-020-2649-2}
}

@article{HouleToussaint,
  author = {Houle, M. E. and Toussaint, G. T.},
  title = {Computing the width of a set},
  journal = {IEEE Transactions on Pattern Analysis and Machine Intelligence},
  volume = {10},
  number = {5},
  pages = {761--765},
  year = {1988},
  doi = {10.1109/34.6790},
  url = {https://doi.org/10.1109/34.6790}
}

@article{Matplotlib,
  author = {Hunter, John D.},
  title = {{Matplotlib}: A {2D} graphics environment},
  journal = {Computing in Science \& Engineering},
  volume = {9},
  number = {3},
  pages = {90--95},
  year = {2007},
  doi = {10.1109/mcse.2007.55},
  url = {https://doi.org/10.1109/mcse.2007.55}
}

@misc{Mpmath,
  author = {Johansson, Fredrik and {The mpmath developers}},
  title = {{mpmath}: A {Python} library for arbitrary-precision floating-point arithmetic},
  year = {2023},
  note = {Version 1.3.0},
  url = {https://mpmath.org/}
}

@misc{LeanCert,
  author = {{LeanCert Contributors}},
  title = {{LeanCert}: Verified interval arithmetic for {Lean 4}},
  year = {2026},
  note = {Software, commit \texttt{baf738991765}},
  url = {https://github.com/alerad/leancert/tree/baf738991765c9cd54665078244508d787ba012a}
}

@misc{BoostMultiprecision,
  author = {Maddock, John and Kormanyos, Christopher},
  title = {{Boost.Multiprecision}},
  note = {Version 1.92.0, documentation},
  url = {https://www.boost.org/library/1.92.0/multiprecision/},
  urldate = {9 October 2026}
}

@inproceedings{Mathlib,
  author = {{The mathlib Community}},
  title = {The {Lean} mathematical library},
  booktitle = {Proceedings of the 9th {ACM} {SIGPLAN} International Conference on Certified Programs and Proofs},
  series = {{CPP} 2020},
  publisher = {Association for Computing Machinery},
  address = {New York, NY, USA},
  pages = {367--381},
  year = {2020},
  doi = {10.1145/3372885.3373824},
}

@misc{Python,
  author = {{Python Software Foundation}},
  title = {{Python} 3.14 documentation},
  year = {2026},
  url = {https://docs.python.org/3.14/},
  urldate = {9 October 2026}
}

@article{SciPy,
  author = {Virtanen, Pauli and Gommers, Ralf and Oliphant, Travis E. and Haberland, Matt and Reddy, Tyler and Cournapeau, David and Burovski, Evgeni and Peterson, Pearu and Weckesser, Warren and Bright, Jonathan and {van der Walt}, St{\'e}fan J. and Brett, Matthew and Wilson, Joshua and Millman, K. Jarrod and Mayorov, Nikolay and Nelson, Andrew R. J. and Jones, Eric and Kern, Robert and Larson, Eric and Carey, C J and Polat, {\.I}lhan and Feng, Yu and Moore, Eric W. and VanderPlas, Jake and Laxalde, Denis and Perktold, Josef and Cimrman, Robert and Henriksen, Ian and Quintero, E. A. and Harris, Charles R. and Archibald, Anne M. and Ribeiro, Ant{\^o}nio H. and Pedregosa, Fabian and {van Mulbregt}, Paul and {SciPy 1.0 Contributors}},
  title = {{SciPy} 1.0: Fundamental algorithms for scientific computing in {Python}},
  journal = {Nature Methods},
  volume = {17},
  number = {3},
  pages = {261--272},
  year = {2020},
  doi = {10.1038/s41592-019-0686-2},
  url = {https://doi.org/10.1038/s41592-019-0686-2}
}

@article{ZalgallerPlanar,
  author = {Zalgaller, V. A.},
  title = {How to get out of the woods? ({On} a problem of {Bellman})},
  journal = {Matematicheskoe Prosveshchenie, Series 2},
  volume = {6},
  pages = {191--195},
  year = {1961},
  note = {In Russian. \url{https://www.mathnet.ru/eng/mp688}},
  url = {https://www.mathnet.ru/eng/mp688}
}

@article{Zalgaller,
  author = {Zalgaller, V. A.},
  title = {The problem of the shortest space curve of unit width},
  journal = {Matematicheskaya Fizika, Analiz, Geometriya},
  volume = {1},
  number = {3--4},
  pages = {454--461},
  year = {1994},
  note = {In Russian. \href{https://mag.ilt.kharkiv.ua/index.php/mag/article/view/m01-0454r}{Original article}; English translation posted by Steven Finch, arXiv:1910.02729 (2019), \url{https://arxiv.org/abs/1910.02729}},
  url = {https://mag.ilt.kharkiv.ua/index.php/mag/article/view/m01-0454r}
}

@Book{BoydVandenberghe,
  author    = {Boyd, Stephen and Vandenberghe, Lieven},
  publisher = {Cambridge University Press},
  title     = {Convex Optimization},
  year      = {2004},
  isbn      = {9780511804441},
  month     = Mar,
  doi       = {10.1017/cbo9780511804441},
  address   = {Cambridge}
}

@article{SphericalCoverage,
  author = {Petkovi{\'c}, Marko D. and Pokrajac, Dragoljub and Latecki, Longin Jan},
  title = {Spherical coverage verification},
  journal = {Applied Mathematics and Computation},
  volume = {218},
  number = {19},
  pages = {9699--9715},
  year = {2012},
  doi = {10.1016/j.amc.2012.03.014},
  url = {https://doi.org/10.1016/j.amc.2012.03.014}
}

@article{CazalsLoriot,
  author = {Cazals, Fr{\'e}d{\'e}ric and Loriot, S{\'e}bastien},
  title = {Computing the arrangement of circles on a sphere, with applications in structural biology},
  journal = {Computational Geometry},
  volume = {42},
  number = {6--7},
  pages = {551--565},
  year = {2009},
  doi = {10.1016/j.comgeo.2008.10.004},
  url = {https://doi.org/10.1016/j.comgeo.2008.10.004}
}

@article{JaulinWalter,
  author = {Jaulin, Luc and Walter, Eric},
  title = {Set inversion via interval analysis for nonlinear bounded-error estimation},
  journal = {Automatica},
  volume = {29},
  number = {4},
  pages = {1053--1064},
  year = {1993},
  doi = {10.1016/0005-1098(93)90106-4},
  url = {https://doi.org/10.1016/0005-1098(93)90106-4}
}

@Article{bellman1956minimizationproblem,
  author    = {Richard Bellman},
  journal   = {Bulletin of the American Mathematical Society},
  title     = {Minimization Problem},
  year      = {1956},
  issn      = {0002-9904},
  month     = May,
  number    = {3},
  pages     = {270--271},
  volume    = {62},
  doi       = {10.1090/s0002-9904-1956-10021-9},
  publisher = {American Mathematical Society (AMS)},
}

@article{Finch01102004,
author = {Steven R. Finch and John E. Wetzel},
title = {Lost in a Forest},
journal = {The American Mathematical Monthly},
volume = {111},
number = {8},
pages = {645--654},
year = {2004},
publisher = {Taylor \& Francis},
doi = {10.1080/00029890.2004.11920126},
}

@misc{bartels_2026_23309323,
  author       = {Bartels, Tinko Sebastian and
                  Choy, Dennis Kevin and
                  Wu, Ruixiang and
                  Li, Tongxin},
  title        = {Supplementary Material to "A short open space
                   curve of unit width"
                  },
  month        = oct,
  year         = 2026,
  publisher    = {Zenodo},
  doi          = {10.5281/zenodo.23309323},
}
\end{document}